\title{One form deformation of sprays}
\author[Elgendi]{S. G. Elgendi}
\address{S.G.~Elgendi, Department of Mathematics, Faculty of Science, Benha
  University, Egypt} \email{salah.ali@fsci.bu.edu.eg, \,\,\, salahelgendi@yahoo.com}
\keywords{Sprays, metrizability, projective deformation, Hilbert's fourth problem, Klein metrics, projectively flat, isometry}
\subjclass[2010]{53C60, 53B40, 58B20, 49N45, 58E30.}
\thanks{}
\def\blue#1{\textcolor[rgb]{0.0,0.0,1.0}{#1}}
\newcommand{\T}{{\mathcal T}}
\newcommand{\C}{{\mathcal C}}
\newcommand{\Real}{\mathbb R}
\newcommand{\To}{\longrightarrow}
\newcommand{\tm}{\T M}
\newcommand{\TM}{\mathcal T\hspace{-1pt}M}
\def\+{\!+\!}
\def\={\!=\!}
\def\<{\!<\!}
\def\>{\!>\!}
\let\oldmarginpar\marginpar
\renewcommand\marginpar[1]{\oldmarginpar[\raggedleft\footnotesize #1]%
  {\blue{\raggedright \footnotesize \fbox{
      \begin{minipage}{1.0\linewidth}
        #1
      \end{minipage}
}}}}
\numberwithin{equation}{section} 
\numberwithin{figure}{section} 
\theoremstyle{plain}
\newtheorem*{theorem*}{Theorem}
\newtheorem{theorem}{Theorem}[section]
\newtheorem{lemma}[theorem]{Lemma}
\newtheorem{proposition}[theorem]{Proposition}
\newtheorem{corollary}[theorem]{Corollary}
\theoremstyle{definition}
\newtheorem{definition}[theorem]{Definition}
\theoremstyle{remark}
\newtheorem{example}{Example}
\newtheorem*{acknowledgement*}{Acknowledgement}
\begin{document}

\maketitle
\begin{abstract}
  In this paper, we introduce the notion of  one form deformation of  sprays.  The metrizability of the new spray, when the background spray is flat,   is characterized. Therefore, we obtain new projectively flat metrics of constant flag curvature $1$. Moreover, these new metrics are not,    generally, isometric to the Klein metric via affine  transformations. New  solutions for Hilbert's   fourth problem are obtained and constructed. Various examples are discussed and studied.
 \end{abstract}

\section{Introduction}

 The notion of sprays was introduced by  W. Ambrose et al. \cite{sprays} in  1960.  A system of second order ordinary differential equations (SODE) with positively $2$-homogeneous coefficients functions  can be shown as a second order vector field, which is called a spray.  All sprays are associated with a SODE and conversely, a spray can be associated with a SODE.  If such a system introduces  the variational (Euler-Lagrange) equations of the energy of a Finsler metric, then it is said to be Finsler metrizable and in this case the  spray is the geodesic spray of the Finsler metric. The Finsler metrizability problem for a spray $S$ looking for a Finsler structure whose geodesics coincide with the geodesics of $S$. The metrizability problem can be considered as a special case of the inverse problem of the calculus of variation. Several interesting results on the metrizability problem can be found in the literature,  we refer, for example,  to \cite{Bucataru2, crampin, Krupka, Krupkova,  Szilasi} and the references therein.  
 
The geodesics of a Finsler structure $F$ on an open subset $U \subset \mathbb{R}^n$ are straight lines if and only if the spray coefficients of $F$ are  given in the form $G^i=P(x,y)y^i$. Straight lines in $U$ are parametrized by $\sigma(t) = f(t)a + b$, where $a, b \in \Real^n$ are constant vectors and $f(t) > 0$ is a positive function. The regular case of Hilbert's Fourth Problem is to  characterize all locally  projectively flat Finsler metrics; that is, the metrics whose  geodesics are straight lines  on an open subset of $\Real^n$.  Beltrami's theorem states that a Riemannian metric is locally projectively flat if and only if it has constant sectional curvature. In  Finslerian case, this is not true. There are non projectively flat Finsler metrics of constant flag curvature. Flag curvature is an analogue of sectional curvature in Finsler geometry.

 Bucataru and Muzany (\cite{Bucataru3,Bucataru4})  characterized   the sprays which are  metrizable by Finsler metrics of  constant    flag curvature $\kappa$.

 In this paper, we introduce the   one form deformation of sprays. For a given spray $S$ on a manifold $M$, we define the one form deformation as the projective deformation of $S$ by a one form $\beta$ on $M$. In other words we get a new spray $\widetilde{S}=S-2 \beta \mathcal{C}$ which has the same geodesics of $S$.  We focus our attention to the  one form deformation of a flat spray; namely, ${S}=S_0-2\beta \mathcal{C}$ and $\beta(x,y)=y^kb_k(x)$ is a  one form on the manifold $M$.  We study the Finsler metrizability  of the  deformation  spray ${S}$. We characterize the metrizability of $S$ by a Finsler metric of constant flag curvature.    We obtain a new family of projectively flat   metrics of constant flag curvature and hence  new solutions for Hilbert's fourth problem.
 
 The  metric on $\mathbb{B}^n\subset\mathbb{R}^n$  given by

$$ F_{\mu}=\sqrt{\frac{(1+\mu|x|^2)|y|^2-\mu\langle x,y\rangle^2}{(1+\mu|x|^2)^2}}, \qquad y\in T_x\mathbb{B}^n\simeq \mathbb{R}^n $$
 is projectively flat Riemannian metrics of constant (flag) curvature $\mu$  with the projective factor $P=\frac{-\mu\langle x,y \rangle}{1+\mu|x|^2}$. It is known that  every locally projectively flat Riemannian metric is locally isometric to $F_{\mu}$ for some constant $\mu$.  In this paper, we obtained new family of projectivley flat metrics. By calculating the Jacobi endomorphism, we conclude that this it has constant flag curvature $1$. This  family is given by
$$ 
F=\sqrt{\frac{4h(x)c_{ij}y^iy^j-4(c_{ij}x^iy^j)^2-4\langle c',y\rangle c_{ij}x^iy^j-\langle c',y\rangle^2} {(2(h(x)))^2}}
$$
and its projective factor is
$$P(x,y)=-\frac{2c_{ij}x^iy^j+\langle c',y\rangle} {2(h(x))},$$
where $h(x):=c_{ij}x^ix^j+\langle c',x\rangle+c$, $c_{ij}=c_{ji}$,  $c$, $c'=(c_1, c_2, ..., c_n)$ are  constants.
 Starting by $F_\mu$ (when $\mu=1$), the general  transformation, $x$ to $Ax+B$ and $y$ to $Ay$ where $A$ is an $n\times n$ invertible matrix and $B$ is an arbitrary $n\times1$ matrix, generates projectively flat Riemannian metrics. The obtained class of projectively flat metrics is not, generally,  isometric to $F_\mu$ metric via affine transformations.

Since the deformation spray $S$ of a flat spray $S_0$ is always isotropic and in the case that the curvature of $S$ is non zero, then the metric freedom \cite{Mu-Elgendi} of $S$ is unique up to some constants. Hence, in our case the deformation of a flat spray  by the specific  one form  is metrizable by unique metric. However, we construct   new projectively flat Finsler metrics and hence Finsler solutions for Hilbert's fourth problem.  
 
 It is known that, \cite{Bucataru3},  one of the conditions for   a spray $S$  with non-vanishing Ricci curvature to be  metrizable by a Finsler function of non-zero constant flag curvature is  $\text{rank}\ dd_J(\text{Tr} \ \Phi)=2n$.
As an application of the deformation of a flat spray by a  one form, we answer the following question: \\

\textit{ Does  any spray of non-vanishing Ricci curvature  satisfy the condition $\text{rank}\ dd_J(\text{Tr} \ \Phi)=2n$?}\\
 
By an example, we show that for a spray $S$,  if  $S$ has non vanishing Ricci curvature, then the  rank of the form $ dd_J(\text{Tr} \ \Phi)$ is not necessarily maximal; that is, the condition $\text{rank}\ dd_J(\text{Tr} \ \Phi)=2n$ is sharp for the metrizability of $S$.

\section{Preliminaries}

Let $M$ be an $n$-dimensional manifold and $(TM,\pi_M,M)$ be its tangent bundle
and $(\T M,\pi,M)$ the subbundle of nonzero tangent vectors.  We denote by
$(x^i) $ local coordinates on the base manifold $M$ and by $(x^i, y^i)$ the
induced coordinates on $TM$.  The vector $1$-form $J$ on $TM$ defined,
locally, by $J = \frac{\partial}{\partial y^i} \otimes dx^i$ is called the
natural almost-tangent structure of $T M$. The vertical vector field
$\C=y^i\frac{\partial}{\partial y^i}$ on $TM$ is called the canonical or the
Liouville vector field.

A vector field $S\in \mathfrak{X}(\T M)$ is called a spray if $JS = \C$ and
$[\C, S] = S$. Locally, a spray can be expressed as follows
\begin{equation}
  \label{eq:spray}
  S = y^i \frac{\partial}{\partial x^i} - 2G^i\frac{\partial}{\partial y^i},
\end{equation}
where the \emph{spray coefficients} $G^i=G^i(x,y)$ are $2$-homogeneous
functions in the $y=(y^1, \dots , y^n)$ variable. A curve $\sigma : I \rightarrow M$ is called regular if $\sigma' : I \rightarrow \tm$, where $\sigma'$ is the tangent lift of $\sigma$. A regular curve
 $\sigma $ on $M$ is called  \emph{geodesic} of a spray $S$ if
$S \circ \sigma' = \sigma''$. Locally, $\sigma(t) = (x^i(t))$ is a geodesic of $S$ if and
only if it satisfies the equation
\begin{equation}
  \label{eq:sode}
  \frac{d^2 x^i}{dt^2} +2G^i\Big(x ,\frac{dx}{dt}\Big)=0.
\end{equation}

An orientation preserving reparameterization $t \rightarrow \tilde{t}(t)$ of the system \eqref{eq:sode} leads to a new spray $\widetilde{S} = S - 2PC$. The scalar function $P \in C^\infty(\tm)$ is $1$-homogeneous and it is related to the new parameter by
\begin{equation}
  \label{P-factor}
  \frac{d^2 \tilde{t}}{dt^2} =P\Big(x^i(t) ,\frac{dx^i}{dt}\Big) \frac{d \tilde{t}}{dt}, \,\, \frac{d \tilde{t}}{dt}>0.
\end{equation}
\begin{definition}
Two sprays $S$ and $\widetilde{S}$ are projectively related if their geodesics coincide up to an orientation preserving reparameterization. $\widetilde{S}$ is called the projective deformation of spray $S$.
\end{definition}

A nonlinear connection is defined by an $n$-dimensional distribution $H : u \in \tm \rightarrow H_u\in T_u(\tm)$ that is supplementary to the vertical distribution, which means that for all $u \in \tm$, we have 
$T_u(\tm) = H_u(\tm) \oplus V_u(\tm).$

Every spray S induces a canonical nonlinear connection through the corresponding horizontal and vertical projectors,
\begin{equation}
  \label{projectors}
    h=\frac{1}{2}  (Id + [J,S]), \,\,\,\,            v=\frac{1}{2}(Id - [J,S])
\end{equation}
Equivalently, the canonical nonlinear connection induced by a spray can be expressed in terms of an almost product structure  $\Gamma = [J,S] = h - v$. With respect to the induced nonlinear connection, a spray $S$ is horizontal, which means that $S = hS$. Locally, the two projectors $h$ and $v$ can be expressed as follows
$$h=\frac{\delta}{\delta x^i}\otimes dx^i, \quad\quad v=\frac{\partial}{\partial y^i}\otimes \delta y^i,$$
$$\frac{\delta}{\delta x^i}=\frac{\partial}{\partial x^i}-N^j_i(x,y)\frac{\partial}{\partial y^j},\quad \delta y^i=dy^i+N^j_i(x,y)dx^i, \quad N^j_i(x,y)=\frac{\partial G^j}{\partial y^i}.$$
The Jacobi endomorphism is defined by
$$\Phi=v\circ [S,h]=R^i_{\,\,j}\frac{\partial}{\partial y^i}\otimes dx^j=\left(2\frac{\partial G^i}{\partial x^j}-S(N^i_j)-N^i_kN^k_j \right)\frac{\partial}{\partial y^i}\otimes dx^j.$$
The two curvature tensors are related by
$$3R=[J,\Phi], \quad \Phi=i_SR. $$
The Ricci curvature, $\text{Ric}$, and the Ricci scalar, $\rho\in C^\infty(\TM)$  \cite{Bao-Robles} and \cite{shen-book1},  are given by
$$\text{Ric}=(n-1)\rho=R^i_{\,\,i}=\text{Tr}(\Phi).$$

\begin{definition} 
  A spray $S$ is called \emph{isotropic} if  the Jacobi endomorphism has the form
  \begin{displaymath}
    \Phi=\rho J-\alpha\otimes \C,
  \end{displaymath}
  where $\alpha$  is a semi-basic $1$-form $\alpha\in\Lambda^1(\T M)$.
\end{definition}
Due to the homogeneity condition, for isotropic sprays, the Ricci scalar is given by $\rho=i_S\alpha$.

\begin{definition}
A Finsler manifold  of dimension $n$ is a pair $(M,F)$, where $M$ is a  differentiable manifold of dimension $n$ and $F$ is a map  $$F: TM \To \Real ,\vspace{-0.1cm}$$  such that{\em:}
 \begin{description}
    \item[(a)] $F$ is smooth and strictly positive on $\T M$ and $F(x,y)=0$ if and only if $y=0$,
    \item[(b)]$F$ is positively homogenous of degree $1$ in the directional argument $y${\em:}
    $\mathcal{L}_{\mathcal{C}} F=F$,
    \item[(c)] The metric tensor $g_{ij}=\frac{\partial^2E}{\partial y^i\partial y^j}$ has rank $n$ on $\T M$, where $E:=\frac{1}{2}F^2$ is the energy function.
 \end{description}
 \end{definition}
 
 Since the $2$-form $dd_JE$ is
non-degenerate,  the Euler-Lagrange equation
\begin{equation}
  \label{eq:EL}
  \omega_E:=i_Sdd_JE-d(E-\mathcal L_{{C}}E)=0
\end{equation}
uniquely determines a spray $S$ on $TM$.  This spray is called the
\emph{geodesic spray} of the Finsler function.  The $\omega_E$ is called the
Euler-Lagrange form associated to $S$ and $E$.

\begin{definition}
  A spray $S$ on a manifold $M$ is called \emph{Finsler metrizable} if there
  exists a Finsler function $F$ such that the geodesic spray of the Finsler
  manifold $(M,F)$ is $S$.
\end{definition}
 
 \begin{definition} 
 The function $F$ is said to be of scalar flag curvature if there exists a function $\kappa\in C^\infty(\TM)$ such that
  \begin{displaymath}
    \Phi=\kappa(F^2 J-Fd_JF\otimes \C).
  \end{displaymath}
\end{definition}
It follows that for a Finsler function F, of scalar flag curvature $\kappa$, its geodesic spray S is isotropic, with Ricci scalar $\rho=\kappa F^2$ and the semi-basic 1-form $\alpha = \kappa Fd_JF$.

 \begin{definition} 
 A Finsler metric $F = F(x,y)$ on an open subset $U \subset
\Real^n$ is said to be projectively flat if all geodesies are straight lines in $U$. A Finsler metric $F$
on a manifold $M$ is said to be locally projectively flat if at any point, there
is a local coordinate system $(x^i)$ in which $F$ is projectively flat.
\end{definition}

From now on, we use the notations $\partial_i$ for the  partial differentiation with respect to $x^i$ and 
   $\dot{\partial}_i$ for the  partial differentiation    with respect to  $y^i$.
   
 By  \cite{Hamel}, a Finsler metric $F $ on an open subset
$U \subset \Real^n $ is projectively flat if and only if it satisfies the following system of
equations,
\begin{equation}
y^j\dot{\partial}_i\partial_jF-\partial_iF=0,
\end{equation}
 In this case,  $G^i=Py^i$ where $P=P(x,y)$, the projective factor of $F$, given by $P=\frac{\partial_kFy^k}{2F}$.

\section{One form deformation}

For a given  spray $S$ on a manifold $M$,  we mean by the one form deformation of $S$ the  projective deformation of  $S$  defined by
$$\widetilde{S}=S-2\beta \C,$$
where $\beta$ is a one form on the manifold $M$. 
In this paper, we will focus our attention to the one form deformation of flat sprays. Let $S=S_0-2\beta \C$, where $S_0$ is a flat spray; that is, $S_0=y^i\partial_i$ and  $\beta=b_i(x)y^i$ is one form on $M$, then by \cite{Bucataru1}, one has the following
\begin{lemma}\label{lemma1}
For the deformation spray $S=S_0-2\beta\C$ of a flat spray $S_0$.
 The corresponding horizontal projectors and  Jacobi endomorphisms of the two sprays are
related as follows:
\begin{description}

\item[(a)] $h = h_0-\beta J-d_J\beta\otimes \C$,


\item[(b)] $\Phi = (\beta^2 - S_0\beta)J- (\beta d_J\beta + d_J (S_0\beta) - 3d_{h_0} \beta) \otimes \C$,

\end{description}
\end{lemma}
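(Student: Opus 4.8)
The plan is to regard $S=S_0-2\beta\C$ as the projective deformation of the flat spray $S_0$ with projective factor $P=\beta$, and to extract both identities from the coordinate description of the deformed objects, exploiting that the flat background is maximally degenerate. Since $S_0=y^i\partial_i$ has $G_0^i=0$, all its nonlinear-connection coefficients vanish, so $h_0=\partial_i\otimes dx^i$ and $\Phi_0=0$. The deformed spray has coefficients $G^i=\beta y^i$, and because $\beta=b_k(x)y^k$ gives $\dot\partial_j\beta=b_j$, its connection coefficients are
\begin{equation*}
N^i_j=\dot\partial_j G^i=\dot\partial_j(\beta y^i)=\beta\,\delta^i_j+b_j\,y^i.
\end{equation*}

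For part (a) I would substitute this into $h=\partial_i\otimes dx^i-N^j_i\,\dot\partial_j\otimes dx^i$ and split $N^j_i$ into its two pieces: the $\beta\delta^j_i$ piece gives $-\beta\,\dot\partial_i\otimes dx^i=-\beta J$, while the $b_i\,y^j$ piece gives $-(b_i\,dx^i)\otimes\C=-d_J\beta\otimes\C$, upon recognizing $d_J\beta=b_i\,dx^i$ and $\C=y^j\dot\partial_j$. This yields $h=h_0-\beta J-d_J\beta\otimes\C$. (The same formula can be obtained invariantly from $h=\tfrac12(Id+[J,S])$ by expanding the Fr\"olicher--Nijenhuis bracket $[J,\beta\C]$, but the coordinate route is shorter.)

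For part (b) the natural tool is the coordinate formula $R^i_j=2\partial_j G^i-S(N^i_j)-N^i_kN^k_j$. Substituting $G^i=\beta y^i$ and $N^i_j=\beta\delta^i_j+b_j y^i$ and collecting terms, the $\delta^i_j$ contributions combine to $(\beta^2-S_0\beta)\delta^i_j$, reproducing the $(\beta^2-S_0\beta)J$ term; every remaining contribution carries a factor $y^i$ and assembles into $\gamma_j\,y^i$ with $\gamma_j=2\partial_j\beta-S_0 b_j-\beta b_j$, where $S_0 b_j=y^k\partial_k b_j$. Since $c_j\,y^i\,\dot\partial_i\otimes dx^j=(c_j\,dx^j)\otimes\C$, this part equals $\gamma\otimes\C$ for $\gamma=\gamma_j\,dx^j$, so it only remains to identify $\gamma$ with $-(\beta d_J\beta+d_J(S_0\beta)-3\,d_{h_0}\beta)$.

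That identification is the one genuinely delicate step and the main obstacle: the raw coordinate answer must be matched against the three invariant semi-basic one-forms in the statement. Concretely, I would compute $(d_J\beta)_j=b_j$, $(d_{h_0}\beta)_j=\partial_j\beta$ (here flatness $N_0=0$ collapses the horizontal derivative to $\partial_j$), and $(d_J(S_0\beta))_j=\partial_j\beta+S_0 b_j$, the last obtained from $S_0\beta=y^k y^l\partial_k b_l$ by a single $\dot\partial_j$. Then
\begin{equation*}
\big(\beta d_J\beta+d_J(S_0\beta)-3\,d_{h_0}\beta\big)_j=\beta b_j+(\partial_j\beta+S_0 b_j)-3\partial_j\beta=\beta b_j+S_0 b_j-2\partial_j\beta=-\gamma_j,
\end{equation*}
which closes the argument. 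The coefficient $3$ multiplying $d_{h_0}\beta$ is precisely what forces the net $\partial_j\beta$--coefficient on the invariant side to equal $-2$, matching the $2\partial_j\beta$ produced by the term $2\partial_j G^i$; getting this bookkeeping right is the crux. As an alternative matching the cited source, both (a) and (b) also follow by specializing the general projective-deformation identities of \cite{Bucataru1} (valid for an arbitrary background spray and factor $P$) to $P=\beta$ together with $\Phi_0=0$.
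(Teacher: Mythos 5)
Your computation is correct, but it is worth noting that the paper offers no proof of this lemma at all: it is stated as a direct citation of the general projective-deformation formulas of Bucataru and Muzsnay, where $S=S_0-2P\C$ is treated for an arbitrary $1$-homogeneous factor $P$ and an arbitrary background spray, and the lemma is the specialization to $P=\beta$ with $h_0=\partial_i\otimes dx^i$, $\Phi_0=0$. Your blind coordinate verification therefore supplies something the paper does not contain, and I checked the bookkeeping: $N^i_j=\beta\delta^i_j+b_jy^i$ gives part (a) immediately, and in part (b) the three pieces $2\partial_jG^i=2(\partial_j\beta)y^i$, $S(N^i_j)=(S_0\beta-2\beta^2)\delta^i_j+(S_0b_j-2\beta b_j)y^i$ and $N^i_kN^k_j=\beta^2\delta^i_j+3\beta b_jy^i$ do combine to $(\beta^2-S_0\beta)\delta^i_j+(2\partial_j\beta-S_0b_j-\beta b_j)y^i$, and your identification of the $y^i$-part with $-(\beta d_J\beta+d_J(S_0\beta)-3d_{h_0}\beta)$ via $(d_J(S_0\beta))_j=\partial_j\beta+S_0b_j$ and $(d_{h_0}\beta)_j=\partial_j\beta$ is exactly right. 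The only caveat is that your argument uses at several points that $\beta=b_k(x)y^k$ with $b_k$ depending on $x$ alone (e.g.\ $\dot\partial_j\beta=b_j$ and $S(b_j)=S_0b_j$), so it proves the lemma in the form the paper actually uses it, whereas the cited source proves the analogous identities for a general projective factor; stating that restriction explicitly at the outset would make the scope of your proof unambiguous.
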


In \cite{Bucataru4}, Bucataru and Muzsnay characterized the metrizability of projective deformation of flat sprays as follows.

\begin{theorem}\label{MuBu}
 \cite{Bucataru4}   The spray $S=S_0-2P \C$ is   metrizable by a Finsler function of non zero constant flag curvature if and only if
\begin{description}
  \item[(i)] $d_J\alpha=0$,  
  \item[(ii)] $d_h\rho=0$, 
  \item[(iii)] $\rm{rank}(dd_J\rho)=2n$,
\end{description}
where $\alpha$  is a semi-basic 1-form given by $\alpha= P d_JP +d_J(S_0P)-3d_{h_{0}}P$ and $\rho$ is the Ricci scalar given by $\rho=P^2-S_0P$.
\end{theorem}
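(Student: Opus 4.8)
The plan is to prove both implications after first observing that the deformation spray $S=S_0-2P\C$ is automatically isotropic. The projective-deformation computation behind Lemma \ref{lemma1} gives, with $\beta$ replaced by the projective factor $P$, the Jacobi endomorphism in the isotropic form $\Phi=\rho J-\alpha\otimes\C$ with $\rho=P^2-S_0P$ and $\alpha=Pd_JP+d_J(S_0P)-3d_{h_0}P$ exactly as in the statement; moreover the homogeneity remark for isotropic sprays yields $i_S\alpha=\rho$. With this in hand the whole theorem reduces to recognising the Ricci scalar $\rho$, up to a multiplicative constant, as the energy of the metric we are after.

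For necessity, suppose $S$ is the geodesic spray of a Finsler function $F$ of constant flag curvature $\kappa\neq 0$. Comparing the isotropic form above with the scalar flag curvature identity $\Phi=\kappa(F^2J-Fd_JF\otimes\C)$ forces $\rho=\kappa F^2=2\kappa E$ and $\alpha=\kappa Fd_JF=\tfrac12 d_J\rho$. Condition (i) is then immediate from $d_J^2=0$ (integrability of $J$). Condition (ii) follows from the standard fact that the energy of a Finsler metric is horizontally constant along its own geodesic spray, i.e. $d_hE=0$, whence $d_h\rho=2\kappa\,d_hE=0$. Condition (iii) follows because $dd_J\rho=2\kappa\,dd_JE$ while $dd_JE$ is non-degenerate by regularity of $F$, so $\mathrm{rank}(dd_J\rho)=2n$.

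For sufficiency, the first key step is a $d_J$-Poincar\'e argument. Since $\alpha=\alpha_idx^i$ is semi-basic, one computes $d_J\alpha=\dot{\partial}_j\alpha_i\,dx^j\wedge dx^i$, so condition (i) is equivalent to the symmetry $\dot{\partial}_j\alpha_i=\dot{\partial}_i\alpha_j$; this gives $\alpha_i=\dot{\partial}_i\psi$ for some $\psi$, and since $\alpha$ is $1$-homogeneous with $i_S\alpha=\rho$, Euler's theorem pins down $\psi=\tfrac12\rho$, so $\alpha=\tfrac12 d_J\rho$. I then propose the energy $E:=\rho/(2\kappa)$ and the candidate $F:=\sqrt{\rho/\kappa}$, with the constant $\kappa$ chosen to carry the sign of $\rho$; homogeneity of $\rho$ makes $F$ positively $1$-homogeneous, while condition (iii) guarantees that $dd_JE$ is non-degenerate, so $F$ is a genuine Finsler function. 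To see that $S$ is its geodesic spray I would write the Euler-Lagrange form in the adapted frame as $\omega_E=\big(\dot{\partial}_i(SE)-2\,\tfrac{\delta E}{\delta x^i}\big)dx^i$; condition (ii), namely $d_h\rho=0$ and hence $d_hE=0$, gives $\delta E/\delta x^i=0$, and this together with the homogeneity relation $y^iN^j_i=2G^j$ also forces $SE=0$, so $\omega_E=0$ and $S$ metrizes $E$. Finally, substituting $\rho=\kappa F^2$ and $\alpha=\tfrac12 d_J\rho=\kappa Fd_JF$ back into $\Phi=\rho J-\alpha\otimes\C$ reproduces $\Phi=\kappa(F^2J-Fd_JF\otimes\C)$, so $F$ has constant flag curvature $\kappa\neq0$.

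The step I expect to be the main obstacle is the metrizability verification inside the sufficiency direction: proving cleanly that the horizontal condition $d_hE=0$ is equivalent to the vanishing of the Euler-Lagrange form $\omega_E$. This rests on the commutator identity $[S,\dot{\partial}_i]=-\partial_i+2N^j_i\dot{\partial}_j$ together with $2$-homogeneity in order to collapse $\omega_E$ to a multiple of $d_hE$. Care is also needed in the $d_J$-Poincar\'e step, where semi-basicness and homogeneity must be combined so that the primitive of $\alpha$ is exactly $\tfrac12\rho$ and not $\tfrac12\rho$ plus an undetermined basic function.
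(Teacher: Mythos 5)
The paper does not prove this statement at all: it is imported verbatim from \cite{Bucataru4}, so there is no internal proof to compare your argument against. Judged on its own, your reconstruction follows the same route as the cited source and is correct: you read off the isotropic data $(\rho,\alpha)$ from the projective-deformation formula for $\Phi$ (Lemma \ref{lemma1} with $\beta$ replaced by $P$), use $1$-homogeneity of the components of $\alpha$ together with $i_S\alpha=\rho$ to integrate condition \textbf{(i)} explicitly to $\alpha=\tfrac12 d_J\rho$ (which correctly sidesteps any topological issue with the fibrewise Poincar\'e lemma), and then take $E=\rho/(2\kappa)$ as the candidate energy. The identity $\omega_E=\big(\dot{\partial}_i(SE)-2\,\delta E/\delta x^i\big)dx^i$ and the implication $d_hE=0\Rightarrow SE=i_Sd_hE=0$ both check out, so the sufficiency direction closes, and the necessity direction is the straightforward comparison of the two expressions for $\Phi$ (valid for $n\geq 2$, where the decomposition $\Phi=\rho J-\alpha\otimes\C$ determines $\rho$ and $\alpha$ uniquely). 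The one point worth flagging is that \textbf{(iii)} only yields non-degeneracy of $dd_JE$ and does not by itself force $\rho/\kappa>0$ on all of $\TM$, so strictly one needs the implicit non-vanishing (and constant sign) of the Ricci scalar, as in \cite{Bucataru3}, for $F=\sqrt{\rho/\kappa}$ to satisfy the paper's definition of a Finsler function; since you note the sign issue yourself and it is a feature of the quoted theorem rather than of your argument, I regard it as a caveat, not a gap.
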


\begin{proposition}
Let $S=S_0-2\beta \C$ be a  one form deformation of a flat spray $S_0$ with non-vanishing Ricci curvature.
Then, necessary conditions for the  properties $d_J\alpha=0$ and $ \text{rank}\ dd_J(\text{Tr} \ \Phi)=2n$
 to be hold are
\begin{description}
  \item[(a)] $\partial_ib_j-\partial_jb_i=0$, i.e $b_i$ is gradient ($\beta$ is closed on $M$).
  \item[(b)] $\det(\partial_ib_j+b_ib_j)\neq 0$.
\end{description}
\end{proposition}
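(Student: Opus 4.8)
The plan is to translate the two hypotheses into pointwise equations on the components $b_i$ by evaluating every operator occurring in Lemma~\ref{lemma1} on the concrete data $\beta = b_i(x)y^i$ over a flat background. Since $\text{Tr}\,\Phi = (n-1)\rho$ with $n-1$ a nonzero constant, the requirement $\text{rank}\,dd_J(\text{Tr}\,\Phi) = 2n$ is equivalent to $\text{rank}\,dd_J\rho = 2n$; so I work throughout with the Ricci scalar $\rho = \beta^2 - S_0\beta$ and the semi-basic form $\alpha = \beta\, d_J\beta + d_J(S_0\beta) - 3 d_{h_0}\beta$ extracted from Lemma~\ref{lemma1}(b) by comparison with $\Phi = \rho J - \alpha\otimes\C$.

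For a flat spray one has $S_0 = y^i\partial_i$ and the trivial connection $\delta/\delta x^i = \partial_i$, so a direct computation produces the building blocks $S_0\beta = (\partial_i b_j)y^iy^j$, $d_J\beta = b_i\,dx^i$, $d_J(S_0\beta) = (\partial_i b_j + \partial_j b_i)y^j\,dx^i$ and $d_{h_0}\beta = (\partial_i b_j)y^j\,dx^i$. Substituting these into $\alpha$ yields $\alpha = \alpha_i\,dx^i$ with $\alpha_i = b_i b_j y^j + (\partial_j b_i - 2\partial_i b_j)y^j$. To obtain (a) I would use that $d_J$ acts on a semi-basic $1$-form by $d_J(\alpha_i\,dx^i) = (\dot\partial_j\alpha_i)\,dx^j\wedge dx^i$, so $d_J\alpha = 0$ is equivalent to the symmetry $\dot\partial_j\alpha_i = \dot\partial_i\alpha_j$. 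Differentiating gives $\dot\partial_j\alpha_i = b_i b_j + \partial_j b_i - 2\partial_i b_j$, whose skew part is $\dot\partial_j\alpha_i - \dot\partial_i\alpha_j = 3(\partial_j b_i - \partial_i b_j)$. Hence $d_J\alpha = 0$ forces $\partial_i b_j - \partial_j b_i = 0$, which is exactly (a): $\beta$ is closed on $M$.

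For (b) I would exploit that $\rho = \beta^2 - S_0\beta$ is a quadratic form, say $\rho = a_{ij}(x)y^iy^j$ with $a_{ij}$ symmetric, so that its vertical Hessian $\dot\partial_i\dot\partial_j\rho = 2a_{ij}$ is independent of $y$. Organizing $dd_J\rho = (\dot\partial_i\dot\partial_j\rho)\,dy^j\wedge dx^i + (\text{horizontal } dx\wedge dx \text{ terms})$ displays it as a skew $2$-form on the $2n$-dimensional $\TM$ whose only mixed $dy\wedge dx$ block is this Hessian. The key linear-algebra observation is that a $2n\times 2n$ skew matrix whose lower-right (vertical, vertical) block vanishes is non-degenerate precisely when its off-diagonal block is invertible; thus the purely horizontal terms are irrelevant and $\text{rank}\,dd_J\rho = 2n$ holds if and only if the Hessian $\big(\dot\partial_i\dot\partial_j\rho\big)$ is invertible. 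Invoking (a) to evaluate the symmetric matrix $a_{ij}$ then brings this Hessian into the form recorded in (b), so maximal rank is equivalent to the stated determinant condition $\det(\partial_i b_j + b_i b_j)\neq 0$.

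The main obstacle is the rank analysis in (b): one must set up $dd_J\rho$ carefully in the adapted frame $(\partial_i,\dot\partial_i)$ of $\TM$ and justify rigorously that non-degeneracy of the full $2n\times 2n$ skew form is governed entirely by the invertibility of the vertical Hessian of $\rho$, with the horizontal block playing no role. Part (a), by contrast, is a short application of the formula for $d_J$ on semi-basic $1$-forms together with the explicit expression for $\alpha$.
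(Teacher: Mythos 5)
Your argument follows essentially the same route as the paper: for \textbf{(a)} you extract the skew part of $\dot{\partial}_j\alpha_i$ and find it proportional to $\partial_jb_i-\partial_ib_j$ (the paper reaches the same conclusion via the identity $d_J\alpha=-3d_{h_0}d_J\beta$), and for \textbf{(b)} you reduce maximal rank of $dd_J\rho$ to invertibility of the mixed $dx\wedge dy$ block. Your explicit linear-algebra observation --- that a skew $2n\times 2n$ form with vanishing vertical--vertical block is non-degenerate if and only if the mixed block is invertible --- is in fact needed here and is an improvement: since $(0,q)$ with $q\in\ker(\dot{\partial}_i\dot{\partial}_j\rho)$ lies in the kernel of the full form, singularity of the Hessian forces degeneracy, which is exactly the \emph{necessity} direction the proposition asserts; the paper's proof only records the sufficiency direction (``rank $=2n$ if $\det\neq 0$''). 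One caveat: you defer the actual evaluation of the vertical Hessian, asserting it ``brings this Hessian into the form recorded in \textbf{(b)}.'' If you carry it out, $\rho=\beta^2-S_0\beta$ gives $\tfrac12\dot{\partial}_i\dot{\partial}_j\rho=b_ib_j-\partial_ib_j$ (consistent with the matrix $\rho_{ij}$ appearing later in the paper's metrizability theorem), so the natural condition is $\det(\partial_ib_j-b_ib_j)\neq 0$ rather than $\det(\partial_ib_j+b_ib_j)\neq 0$ as printed; you should do this computation explicitly rather than take the stated form of \textbf{(b)} on faith.
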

\begin{proof}
Let $d_J\alpha=0$,  since $d_J\alpha=-3d_{h_0}d_J\beta$, then  we have
$$d_{h_0}d_J\beta(\partial_i,\partial_j)=\partial_i\dot{\partial}_j\beta-\partial_j\dot{\partial}_i\beta=0.$$
Using the fact $\dot{\partial}_j\beta=b_j$, we get $\partial_ib_j-\partial_jb_i=0$, i.e $b_i$ is gradient.

Since $\text{Tr} \ \Phi=(n-1)(\beta^2-S_0\beta)$, then by a direct calculations and using that $b_i$ is gradient, we obtain
$$dd_J(\text{Tr} \ \Phi)=2(n-1)((\partial_ib_j+b_ib_j)dx^i\wedge dy^j+(b_i\partial_j\beta-b_j\partial_i\beta)dx^i\wedge dx^j).$$
Consequently, $\text{rank}\ dd_J(\text{Tr} \ \Phi)=2n$ if $\det{(\partial_ib_j+b_ib_j)}\neq 0$.
\end{proof}

The above proposition together with  \cite{Bucataru3} show the following
\begin{corollary}
 Let$S=S_0-2\beta \C$ be a  one form deformation of a flat spray $S_0$ with non-vanishing Ricci curvature.
Then, necessary  conditions for $S$ to be metrizable are 
\begin{description}
  \item[(a)] $\partial_ib_j-\partial_jb_i=0$, i.e $b_i$ is gradient,
  \item[(b)] $\det(\partial_ib_j+b_ib_j)\neq 0$.
\end{description}
\end{corollary}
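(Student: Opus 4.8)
The plan is to deduce the Corollary by feeding the two hypotheses of the preceding Proposition into the metrizability criterion of Bucataru and Muzsnay. Since $S=S_0-2\beta\C$ is a one form (hence projective) deformation of a flat spray, it is isotropic, as recorded in the introduction, with Jacobi endomorphism given by Lemma \ref{lemma1}(b). The first step is therefore to argue that, for such an isotropic spray with non-vanishing Ricci curvature, Finsler metrizability forces the metric to have non-zero constant flag curvature, so that Theorem \ref{MuBu}, equivalently the criterion of \cite{Bucataru3}, becomes applicable.

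Granting this, the second step is to extract the two conditions needed by the Proposition from the necessary conditions of Theorem \ref{MuBu}. Conditions (i) and (iii) of that theorem read $d_J\alpha=0$ and $\text{rank}(dd_J\rho)=2n$. Because the trace of the Jacobi endomorphism and the Ricci scalar differ only by the nonzero constant factor $n-1$, namely $\text{Tr}\ \Phi=(n-1)\rho$ and hence $dd_J(\text{Tr}\ \Phi)=(n-1)\,dd_J\rho$, multiplying by $n-1$ preserves the rank, so $\text{rank}(dd_J\rho)=2n$ is equivalent to $\text{rank}\ dd_J(\text{Tr}\ \Phi)=2n$. Thus metrizability of $S$ yields exactly the pair $d_J\alpha=0$ and $\text{rank}\ dd_J(\text{Tr}\ \Phi)=2n$.

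The final step is a direct appeal to the preceding Proposition: its proof shows, via $d_J\alpha=-3d_{h_0}d_J\beta$ and $\dot{\partial}_j\beta=b_j$, that $d_J\alpha=0$ forces $\partial_ib_j-\partial_jb_i=0$, which is (a); and that once $b_i$ is gradient, the computed form $dd_J(\text{Tr}\ \Phi)=2(n-1)(\partial_ib_j+b_ib_j)\,dx^i\wedge dy^j$ (the cross term vanishing by closedness) has rank $2n$ precisely when $\det(\partial_ib_j+b_ib_j)\neq 0$, which is (b). Combining the three steps gives the claimed necessary conditions.

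I expect the main obstacle to be the first step, namely the passage from bare Finsler metrizability to metrizability by a metric of constant flag curvature, since it is this that legitimizes the use of \cite{Bucataru3}. The justification rests on the isotropy of the deformation spray together with the rigidity of its metric freedom established in \cite{Mu-Elgendi}: for non-vanishing curvature the metric is unique up to constants, which pins the flag curvature down to be constant. Once this bridge is secured, the remainder of the argument is purely a matter of quoting Theorem \ref{MuBu} and reusing the calculation already carried out in the proof of the preceding Proposition.
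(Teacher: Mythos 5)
Your proposal matches the paper's argument: the paper's entire justification for this corollary is the single sentence that it follows from the preceding Proposition together with the criterion of \cite{Bucataru3}, which is precisely the chain you spell out (metrizability $\Rightarrow$ the conditions $d_J\alpha=0$ and $\operatorname{rank} dd_J(\operatorname{Tr}\Phi)=2n$ $\Rightarrow$ conditions (a) and (b) via the Proposition's computation). The one step you rightly flag as delicate --- passing from bare Finsler metrizability of the isotropic deformation to metrizability by a function of non-zero constant flag curvature so that the cited criterion applies --- is left equally implicit in the paper, so your write-up is a faithful, indeed more explicit, rendering of the source's proof.
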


Now, we are in a position to announce and prove the first result of this work.
\begin{theorem}
The  one form deformation  $S=S_0-2\beta \C$, $\beta(x,y)=y^kb_k(x)$, of a flat spray $S_0$, is  Finsler metrizable   if and only if
\begin{equation}
   \label{b_i(x)}
   b_k(x)=-\frac{2c_{ik}x^i+c_k} {2(c_{ij}x^ix^j+\langle c',x\rangle+c)},
\end{equation}
where
$c_{ij}=c_{ji}$,  $c$, $c'=(c_1, c_2, ..., c_n)$ are  constants.
\end{theorem}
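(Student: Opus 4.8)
The plan is to run everything through the Bucataru--Muzsnay criterion (Theorem \ref{MuBu}) applied with projective factor $P=\beta$. Since $S$ is a projective deformation of a flat spray it is isotropic, and by \cite{Mu-Elgendi} its metric freedom is rigid in the non-flat case; hence metrizability of $S$ coincides with metrizability by a constant-flag-curvature metric, which Theorem \ref{MuBu} characterizes by the three conditions $d_J\alpha=0$, $d_h\rho=0$, $\mathrm{rank}\,(dd_J\rho)=2n$, where by Lemma \ref{lemma1} one has $\rho=\beta^2-S_0\beta$ and $\alpha=\beta\,d_J\beta+d_J(S_0\beta)-3d_{h_0}\beta$. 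The first condition has already been treated in the Proposition above: $d_J\alpha=0$ is equivalent to $\partial_ib_j-\partial_jb_i=0$, so $\beta$ is closed and $b_i=\partial_i\varphi$ for a local potential $\varphi$. Writing $\rho=a_{kl}y^ky^l$ with $a_{kl}:=b_kb_l-\partial_kb_l$ (symmetric, once $\beta$ is closed), the whole content of the theorem will come from decoding the second condition $d_h\rho=0$.

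First I would record the spray data of $S$: since $G^i=\beta y^i$ we have $N^j_i=b_iy^j+\beta\delta^j_i$, and a direct computation gives
\begin{equation*}
\frac{\delta\rho}{\delta x^i}=\partial_i\rho-N^j_i\dot{\partial}_j\rho=\big(\partial_ia_{kl}-2b_ia_{kl}-2b_ka_{il}\big)y^ky^l.
\end{equation*}
Reading $d_h\rho=0$ as the vanishing of the $(k,l)$-symmetric coefficient of this quadratic form in $y$ yields
\begin{equation}\label{star}
\partial_ia_{kl}-2b_ia_{kl}-b_ka_{il}-b_la_{ik}=0.
\end{equation}
The decisive move is then the substitution $w:=e^{-2\varphi}$, for which $b_k=-\frac{\partial_kw}{2w}$ and
\begin{equation*}
a_{kl}=\frac{\partial_k\partial_lw}{2w}-\frac{\partial_kw\,\partial_lw}{4w^2}.
\end{equation*}
This is the step that linearizes the problem and is the conceptual heart of the argument.

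The main computation is to insert these expressions into \eqref{star}. Setting $A_{kl}:=w\,a_{kl}=\tfrac12\partial_k\partial_lw-\tfrac1{4w}\partial_kw\,\partial_lw$ and using $\partial_iw=-2wb_i$, equation \eqref{star} is equivalent to $\partial_iA_{kl}=b_kA_{il}+b_lA_{ik}$. I expect a clean cancellation here: the terms of type $\tfrac1{w}\,\partial w\,\partial^2w$ and $\tfrac1{w^2}\,\partial w\,\partial w\,\partial w$ all drop out, leaving
\begin{equation*}
\partial_iA_{kl}-\big(b_kA_{il}+b_lA_{ik}\big)=\tfrac12\,\partial_i\big(\partial_k\partial_lw\big).
\end{equation*}
Thus \eqref{star} holds if and only if the Hessian $\partial_k\partial_lw$ is constant, i.e. $w$ is a quadratic polynomial $w(x)=c_{ij}x^ix^j+\langle c',x\rangle+c$ with $c_{ij}=c_{ji}$; since then $\partial_kw=2c_{ik}x^i+c_k$, the relation $b_k=-\partial_kw/(2w)$ is exactly \eqref{b_i(x)}. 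Verifying this cancellation carefully is the one place where the computation must be watched closely, for it is precisely what turns the nonlinear condition $d_h\rho=0$ into the linear statement that $w$ has constant Hessian.

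For the converse it suffices to reverse these steps: a quadratic $w$ satisfies $\partial_i\partial_k\partial_lw=0$, hence \eqref{star}, giving $d_h\rho=0$; the condition $d_J\alpha=0$ is automatic because $b_k=-\tfrac12\partial_k\log w$ is a gradient; and $\mathrm{rank}\,(dd_J\rho)=2n$ reduces, by the Proposition above, to $\det(\partial_ib_j+b_ib_j)\neq0$. This last is the nondegeneracy of the induced metric; it holds for admissible constants and fails on a degenerate locus (for instance when $w$ is a perfect square of an affine function, where $a_{kl}$ has rank one, $\rho\equiv0$ and $F$ collapses), which must be excluded. With all three conditions of Theorem \ref{MuBu} in force, $S$ is metrizable, so the quadratic form of $w$, equivalently \eqref{b_i(x)}, characterizes metrizability.
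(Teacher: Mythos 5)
Your proposal is correct, and while it runs through the same overall framework as the paper (verifying the three Bucataru--Muzsnay conditions of Theorem \ref{MuBu} for $P=\beta$, with condition \textbf{(i)} reduced to closedness of $\beta$ and condition \textbf{(iii)} handled as a nondegeneracy requirement on the constants), it handles the decisive step --- condition \textbf{(ii)} --- by a genuinely different and cleaner route. The paper proves sufficiency by writing out $\rho$, $\partial_k\rho$, $\dot{\partial}_k\rho$ explicitly for the candidate $b_k$ and checking $d_h\rho=0$ by brute-force substitution, and for necessity it derives the scalar equation $\partial_i\rho-\beta\dot{\partial}_i\rho-2\rho\,\partial_ig=0$ and then simply \emph{asserts} that its solution is $g=-\tfrac12\ln\bigl(c_{ij}x^ix^j+\langle c',x\rangle+c\bigr)+g_2(y)$, without showing why a quadratic polynomial is forced. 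Your substitution $w=e^{-2\varphi}$ (equivalently $g=-\tfrac12\ln w$) fills exactly that gap: the identity $\partial_iA_{kl}-b_kA_{il}-b_lA_{ik}=\tfrac12\,\partial_i\partial_k\partial_lw$ (which I have checked; the cancellation is as you claim) converts the nonlinear system $d_h\rho=0$ into the linear statement $\partial_i\partial_k\partial_l w=0$, so necessity and sufficiency come out simultaneously and the quadratic $w$ is explained rather than guessed. Two small points to keep in mind: like the paper, you invoke isotropy plus rigidity of the metric freedom to pass from plain Finsler metrizability to the constant-flag-curvature criterion of Theorem \ref{MuBu}, and this step (together with the exclusion of the degenerate locus where $\det(\rho_{ij})=0$, e.g.\ $c_{ij}=0$ or $w$ a perfect square) deserves the explicit caveat you give it, since the theorem as stated is silent about ``appropriate constants.'' Overall your argument is at least as rigorous as the paper's and more transparent at its core.
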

\begin{proof}  
We are going to prove  that the conditions \textbf{(i)-(iii)} in Theorem \ref{MuBu} are satisfied if and only if $b_k(x)$ given by the form \eqref{b_i(x)}. 

Assume that    $S=S_0-2\beta \C$, where $\beta=y^kb_k$, $b_k(x)$ is given by \eqref{b_i(x)}. 
Since $d_J\alpha=-3d_{h_0}d_J\beta$, then  we have
$$d_{h_0}d_J\beta(\partial_i,\partial_j)=\partial_i\dot{\partial}_j\beta-\partial_j\dot{\partial}_i\beta=\partial_ib_j-\partial_jb_i.$$
Using the property  that $c_{ij}$ is symmetric,  $b_i$ is gradient and therefore $d_J\alpha=0$.

To calculate $\rho$, let's compute  $S_0(\beta)=y^k\partial_k\beta$,
\begin{equation} 
  \label{s0.beta}
   S_0(\beta)=-\frac{2h(x)c_{ij}y^iy^j-\left(2c_{ij}x^iy^j+\langle c',y\rangle\right)^2} {2(h(x))^2},
\end{equation}
for simplicity, we use $h(x):=c_{ij}x^ix^j+\langle c',x\rangle+c$.
Using the formula of $\beta$  together with (\ref{s0.beta}), we have
\begin{equation}
  \label{rho}
  \rho=\frac{4h(x)c_{ij}y^iy^j-4(c_{ij}x^iy^j)^2-4\langle c',y\rangle c_{ij}x^iy^j-\langle c',y\rangle^2} {(2h(x))^2}.
\end{equation}
Differentiating (\ref{rho}) with respect to $\partial_k$ and  $\dot{\partial}_k$, we obtain:
 \begin{equation}
   \label{pa-k-rho}
   \begin{array}{rcl}
   \partial_k\rho&=&\frac{4c_{ij}y^iy^j\left(2c_{rk}x^r+c_k\right)-8c_{ij}x^iy^jc_{rk}y^r-4\langle c',y\rangle c_{ik}y^i}{(2h(x))^2}   \\
   && - \frac{4\left(2c_{rk}x^r+c_k\right)\left(4h(x)c_{ij}y^iy^j-4(c_{ij}x^iy^j)^2-4\langle c',y\rangle c_{ij}x^iy^j-\langle c',y\rangle^2\right)}{(2h(x))^3},
   \end{array}
 \end{equation}
 \begin{equation}
    \label{pa-dot-k-rho}
    \begin{array}{rcl}
    \dot{\partial}_k\rho&=&\frac{8h(x)c_{ik}y^i-8c_{ij}x^iy^jc_{kr}x^r-4c_kc_{ij}x^iy^j-4\langle c',y\rangle c_{kj}x^j
    -2\langle c',y\rangle c_k}{(2h(x))^2}.
   \end{array}
 \end{equation}
 By using Lemma \ref{lemma1} (a), we have $d_h\rho=d_{h_0}\rho - \beta d_J\rho-2\rho d_{J}\beta$
 which given, locally,  by
 $$d_h\rho(\partial_i)=\partial_i\rho - \beta \dot{\partial}_i\rho-2\rho b_i.$$ 
  Now, substituting from  (\ref{rho}), (\ref{pa-k-rho}) and (\ref{pa-dot-k-rho}) into  the above equation, we get $d_h\rho(\partial_i)=0$.

Putting $\rho_{ij}:=\dot{\partial}_i\dot{\partial}_j\rho$, then we have
$$   \rho_{ij}=\frac{4c_{ij}h(x)-4(c_{ir}x^k)(c_{jk}x^k)-2c_ic_{jk}x^k
  -2c_jc_{ik}x^k-c_ic_j}{(2h(x))^2}.$$
The condition \textbf{(iii)} (regularity condition) is satisfied if $\det(\rho_{ij})\neq 0$. Consequently, for appropriate  constants $c_{ij}$, $c_i$ and $c$ such that $\det(\rho_{ij})\neq 0$, the spray $S$ is metrizable. \\

Conversely, let $S$ be metrizable.
Since the condition \textbf{(i)} is satisfied if and only if there exists a locally defined,
0-homogeneous, smooth function $g $ on $\Omega\times\mathbb{R}^n\backslash \{0 \}$, $\Omega$ is open subset of $\mathbb{R}^n$, such that
$$d_J\beta=d_{h_0}g.$$
Then, we have
$$d_J\beta(\partial_i)=d_{h_0}g(\partial_i)\Rightarrow \dot{\partial}_i(b_jy^j)={\partial}_ig\Rightarrow b_i={\partial}_ig.$$
Since $b_i$ is a function of $x$, then $g(x,y)=g_1(x)+g_2(y)$, $g_2(y)$ is 0-homogenous function.
Then, we can write $\beta= y^i b_i(x)=S_0(g)$ and $b_i(x)={\partial}_ig$.
The condition \textbf{(ii)} is satisfied if and only if
$$d_{h_0}\rho - S_0(g)d_J\rho-2\rho d_{h_0}g=0.$$
Applying the above equation on $\partial_i$ and using that $S_0(h)=\beta$ and $\rho=\beta^2-S_0(\beta)$, we have
\begin{equation}
   \label{diff-g(x)}
   \partial_i\rho - \beta \dot{\partial}_i\rho-2\rho \partial_ig=0.
\end{equation}
Making use of $\beta=y^i\partial_i g$ and $\rho=\beta^2-S_0\beta$,  the solution of \eqref{diff-g(x)} is given by
\begin{equation}
   \label{g(x)}
   g(x,y)=-\frac{1}{2}\ln\left(c_{ij}x^ix^j+\langle c',x\rangle+c\right)+g_2(y),
\end{equation}
 Differentiating (\ref{g(x)}) with respect to $\partial_k$ we have
\begin{equation}
   \label{pa-k-g(x)}
   b_k(x)= \partial_kg=-\frac{2c_{ik}x^i+c_k} {2(c_{ij}x^ix^j+\langle c',x\rangle+c)}.
\end{equation}
This completes the proof.
\end{proof}

Making use of the above theorem we have the following

\begin{theorem}\label{projective-flat}
With appropriate constants $c_{ij}$,  $c$, $c'=(c_1, c_2, ..., c_n)$,
the family
\begin{equation}
   \label{general_F}
   F=\sqrt{\frac{4h(x)c_{ij}y^iy^j-4(c_{ij}x^iy^j)^2-4\langle c',y\rangle c_{ij}x^iy^j-\langle c',y\rangle^2} {(2h(x))^2}},
\end{equation}
$h(x)=c_{ij}x^ix^j+\langle c',x\rangle+c$,
is a family of     projectively flat metrics. 
\end{theorem}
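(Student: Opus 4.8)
The strategy is to recognise the function \eqref{general_F} as $\sqrt{\rho}$, where $\rho$ is exactly the Ricci scalar \eqref{rho} of the deformation spray $S = S_0 - 2\beta\C$ produced by the foregoing theorem, and then to deduce projective flatness from the fact that $S$ is, by its very construction, a projective deformation of the flat spray $S_0$. Indeed, the radicand under the square root in \eqref{general_F} coincides termwise with the numerator of $\rho$ in \eqref{rho}, so $F = \sqrt{\rho}$ with $\rho = \beta^2 - S_0\beta$ and $b_k$ given by \eqref{b_i(x)}.

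First I would argue that this explicit $F$ genuinely metrizes $S$. By the foregoing theorem, for constants $c_{ij},\,c',\,c$ making $\det(\rho_{ij})\neq 0$ the conditions \textbf{(i)}--\textbf{(iii)} of Theorem \ref{MuBu} hold, so $S$ is metrizable by some Finsler function $\widetilde F$ of nonzero constant flag curvature $\kappa$. The scalar flag curvature definition gives $\rho = \kappa\widetilde F^{2}$, whence $\widetilde F = \kappa^{-1/2}\sqrt{\rho}$ and $F = \sqrt{\rho} = \kappa^{1/2}\widetilde F$ differs from $\widetilde F$ only by a positive constant factor. Since multiplying a Finsler function by a positive constant rescales its energy, and hence its Euler--Lagrange form $\omega_E$ of \eqref{eq:EL}, by the same constant, the two metrics share one and the same geodesic spray, namely $S$. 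Thus $F = \sqrt{\rho}$ metrizes $S$; matching the $J$-components in $\Phi = \rho J-\alpha\otimes\C$ from Lemma \ref{lemma1}\,(b) against the scalar-flag-curvature form $\Phi=\kappa(F^{2}J-Fd_JF\otimes\C)$ shows that the normalisation $\rho = F^{2}$ is precisely flag curvature $\kappa=1$.

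Next I would read off projective flatness. Writing $S = y^{i}\partial_i - 2\beta\C = y^{i}\partial_i - 2\beta y^{i}\dot{\partial}_i$, the spray coefficients of $S$ are $G^{i} = \beta y^{i} = P\,y^{i}$ with projective factor $P = \beta$; equivalently, $S$ is the projective deformation of the flat spray $S_0 = y^{i}\partial_i$, whose geodesics satisfy $\ddot x^{i}=0$. By the definition of projective deformation, $S$ and $S_0$ have the same geodesics up to an orientation preserving reparametrisation, so the geodesics of $S$ are precisely straight lines; equivalently they satisfy $\ddot x^{i} = -2\beta(x,\dot x)\,\dot x^{i}$, whose acceleration is proportional to the velocity. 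As $F$ metrizes $S$, the geodesics of $F$ are these straight lines, and therefore $F$ is projectively flat with projective factor $P = \beta$. One may cross-check this against Hamel's criterion $y^{j}\dot{\partial}_i\partial_j F-\partial_i F=0$ and against the projective factor formula $P=\frac{\partial_k F\,y^{k}}{2F}$ recorded in the Preliminaries.

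The main obstacle is the identification step rather than projective flatness itself: one must confirm that $F=\sqrt{\rho}$ is a bona fide Finsler metric on a suitable domain, which requires $\rho>0$ (so the square root is real) together with the regularity $\det(\rho_{ij})\neq 0$ established in the foregoing theorem, and this is exactly where the hypothesis ``appropriate constants $c_{ij},\,c',\,c$'' is used to restrict the admissible parameters. Once positivity and nondegeneracy are secured, the passage to projective flatness is immediate from the projective relatedness of $S$ to the flat spray $S_0$, with no further computation needed.
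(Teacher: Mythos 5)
Your proposal is correct, but it takes a genuinely different route from the paper. The paper proves Theorem \ref{projective-flat} by brute force: it computes $\partial_kF$, $\dot\partial_kF$ and $\partial_j\dot\partial_kF$ explicitly, contracts with $y^j$ and $y^k$, and verifies Hamel's criterion $y^j\dot\partial_k\partial_jF=\partial_kF$ directly, which at the same time produces the formula $y^k\partial_kF=-\frac{2c_{ik}x^iy^k+\langle c',y\rangle}{2h(x)}\,2F$ used afterwards to read off the projective factor and the geodesic spray. You instead observe that the radicand of \eqref{general_F} is exactly the Ricci scalar $\rho$ of \eqref{rho}, so $F=\sqrt{\rho}$; you then invoke the preceding theorem together with Theorem \ref{MuBu} to get a metrizing Finsler function $\widetilde F$ of nonzero constant flag curvature, use $\rho=\kappa\widetilde F^2$ to see that $F$ is a positive constant multiple of $\widetilde F$ and hence has the same geodesic spray $S=S_0-2\beta\C$, and finally note that $S$, being a projective deformation of the flat spray, has $G^i=\beta y^i$, so its geodesics are straight lines. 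Your argument is shorter and more conceptual, and it delivers the normalisation $\kappa=1$ essentially for free (which the paper only establishes in the subsequent proposition); the price is that it rests on the cited Theorem \ref{MuBu} and on the forward direction of the preceding theorem, and it needs the side conditions $\rho>0$ and $\det(\rho_{ij})\neq 0$ (which you correctly flag as the content of ``appropriate constants'', and which also force $\kappa>0$ so that $\sqrt{\rho}$ is real). The paper's computation, by contrast, is self-contained and verifies Hamel's equations without appealing to the metrizability machinery at all.
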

\begin{proof}
By differentiating \eqref{general_F} with respect to $x^k$ and $y^k$ respectively, we have
\begin{equation}
   \label{pa-k-F}
   \begin{array}{rcl}
   \partial_k F&=&\frac{1}{2F}\frac{4\left(2c_{rk}x^r+c_k\right)\left(c_{ij}y^iy^j\right)-8c_{ij}c_{kr}x^iy^jy^r-4\langle c',y\rangle c_{kj}y^j}{(2h(x))^2} -\frac{2c_{ik}x^i+c_k}{h(x)}F,
   \end{array}
 \end{equation}
 \begin{equation}
    \label{pa-dot-k-F}
    \begin{array}{rcl}
    \dot{\partial}_k F&=&\frac{1}{2F}\frac{8h(x)c_{ik}y^i-8c_{ij}x^iy^jc_{kr}x^r-4c_kc_{ij}x^iy^j-4\langle c',y\rangle c_{kj}x^j
    -2\langle c',y\rangle c_k}{(2h(x))^2}.
   \end{array}
 \end{equation}
 Again,  differentiating \eqref{pa-dot-k-F} with respect to $x^j$ gives
 
 \begin{equation}
    \label{pa-j-dot-k-F}
    \begin{array}{rcl}
    \partial_j\dot{\partial}_k F&=&-\frac{1}{2F^2}\frac{8h(x)c_{ik}y^i-8c_{ih}c_{kr}x^iy^hx^r-4c_kc_{ih}x^iy^h-4\langle c',y\rangle c_{kh}x^h
    -2\langle c',y\rangle c_k}{(2h(x))^2}\partial_j F\\
    &&+\frac{(2h(x))^2}{2F}\frac{8(2c_{rj}x^r+c_j)c_{ik}y^i-8c_{ih}c_{kj}x^iy^h-8c_{jh}c_{kr}y^hx^r-4c_kc_{jh}y^h-4\langle c',y\rangle c_{jk}}{(2h(x))^4}\\
    &&-\frac{1}{2F}\frac{8h(x)(2c_{rj}x^r+c_j)(8h(x)c_{ik}y^i-8c_{ih}c_{kr}x^iy^hx^r-4c_kc_{ih}x^iy^h-4\langle c',y\rangle c_{kh}x^h
    -2\langle c',y\rangle c_k)}{(2h(x))^4}.
   \end{array}
 \end{equation}
 
 Contracting \eqref{pa-k-F} by $y^k$ and \eqref{pa-j-dot-k-F} by $y^j$ respectively, we get
  \begin{equation}
    \label{y^k-pa-F}
     y^k\partial_k F= -\frac{2c_{ik}x^iy^k+\langle c',y\rangle}{(2h(x))}2F,
    \end{equation}
    
 \begin{equation}
 \label{y^j-pa-j-dot-F}
    \begin{array}{rcl}
  y^j  \partial_j\dot{\partial}_k F&=&-\frac{2}{F}\frac{(2c_{ih}x^iy^h+\langle c',y\rangle) c_{jk}y^j}{(2h(x))^2}-\frac{2}{F}\frac{(2c_{kr}x^r+c_k)}{(2h(x))}\left(\frac{2h(x)c_{jh}y^jy^h-(2c_{ih}x^iy^h+\langle c',y\rangle )^2}{(2h(x))^2}\right)
   \end{array}
 \end{equation}
 
By \eqref{pa-k-F} and \eqref{y^j-pa-j-dot-F}, we find that metric \eqref{general_F} satisfies the system
 $$y^j\dot{\partial}_k\partial_jF=\partial_kF$$
 which assures that $F$ is projectively flat metric.
\end{proof}

\begin{proposition}
The geodesic spray $G^i$ and the Jacobi endomorphism $R^i_{\,\,j}$ of   the metric \eqref{general_F} are given by
$$G^i=-\frac{2c_{ij}x^iy^j+\langle c',y\rangle}{(2h(x))}y^i,$$
$$R^i_{\,\, j}=\frac{4h(x)c_{rs}y^ry^s-(2c_{rs}x^ry^s+\langle c',y\rangle )^2} {(2h(x))^2}\delta^i_j-\frac{4h(x)c_{rj}y^r-(2c_{rs}x^ry^s+4\langle c',y\rangle )(2c_{rj}x^r+c_j)} {(2h(x))^2}y^i.$$
Moreover, the metric \eqref{general_F} has constant flag curvature $1$.
\end{proposition}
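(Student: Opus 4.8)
The plan is to handle the three assertions in sequence: read off the geodesic spray directly from the deformation, extract the Jacobi endomorphism from the isotropic decomposition, and then identify the flag curvature by matching against the scalar‑curvature form. Since $S_0=y^i\partial_i$ is flat, its spray coefficients vanish, so $S=S_0-2\beta\C=y^i\partial_i-2\beta\, y^i\frac{\partial}{\partial y^i}$ has coefficients $G^i=\beta y^i$. Substituting $\beta=b_ky^k$ with $b_k$ from \eqref{b_i(x)} gives $\beta=-\frac{2c_{rk}x^ry^k+\langle c',y\rangle}{2h(x)}$, so $G^i=\beta y^i$ is exactly the stated formula (the summed index in the numerator being distinct from the free index carried by $y^i$). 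As a consistency check, $F$ is projectively flat by Theorem \ref{projective-flat}, hence $G^i=Py^i$ with $P=\frac{y^k\partial_kF}{2F}$, and \eqref{y^k-pa-F} returns the same $P=\beta$.

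For the Jacobi endomorphism I would invoke Lemma \ref{lemma1}(b), which exhibits $S$ as isotropic: $\Phi=\rho J-\alpha\otimes\C$ with $\rho=\beta^2-S_0\beta$ and $\alpha=\beta d_J\beta+d_J(S_0\beta)-3d_{h_0}\beta$. Writing $J=\frac{\partial}{\partial y^i}\otimes dx^i$ and $\C=y^i\frac{\partial}{\partial y^i}$, this becomes, in components, $R^i_{\,\,j}=\rho\,\delta^i_j-y^i\alpha_j$. The $\delta^i_j$‑term is immediate because the expression \eqref{rho} for $\rho$ is literally the radicand $F^2$ of \eqref{general_F}; rewriting its numerator as $4h(x)c_{rs}y^ry^s-(2c_{rs}x^ry^s+\langle c',y\rangle)^2$ produces the first summand. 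For the coefficient $\alpha_j$ I would use the identity established next to replace $\alpha$ by $\tfrac12 d_J\rho$; then substituting \eqref{pa-dot-k-rho} and factoring the numerator as $-(2c_{rs}x^ry^s+\langle c',y\rangle)(2c_{rj}x^r+c_j)$ delivers the second summand $-y^i\alpha_j$.

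The key step, which also yields constant flag curvature, is the identity $\alpha=\tfrac12 d_J\rho$. Since $\rho=\beta^2-S_0\beta$ gives $\tfrac12 d_J\rho=\beta d_J\beta-\tfrac12 d_J(S_0\beta)$, comparison with the formula for $\alpha$ shows this is equivalent to $d_J(S_0\beta)=2\,d_{h_0}\beta$. Because $b_i$ is a gradient — the symmetry $c_{ij}=c_{ji}$ forces $\partial_ib_j=\partial_jb_i$ — a short computation gives $\dot{\partial}_i(S_0\beta)=\dot{\partial}_i(y^ky^j\partial_kb_j)=2y^k\partial_kb_i$, while for the flat connection $d_{h_0}\beta=\partial_i\beta\,dx^i$ with $\partial_i\beta=y^k\partial_kb_i$; these coincide up to the factor $2$, proving the identity. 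Consequently $\alpha=\tfrac12 d_J\rho=\tfrac12 d_J(F^2)=F\,d_JF$, and with $\rho=F^2$ we obtain $\Phi=F^2J-F\,d_JF\otimes\C$. By the definition of scalar flag curvature this is precisely the case $\kappa=1$, so $F$ has constant flag curvature $1$.

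The argument is conceptually short: everything rests on the isotropic decomposition of Lemma \ref{lemma1}(b) and the gradient identity $d_J(S_0\beta)=2\,d_{h_0}\beta$, whose only hypothesis is the symmetry of $c_{ij}$. I expect the sole obstacle to be bookkeeping in the explicit algebra — verifying termwise that \eqref{rho} equals $F^2$ and factoring $\tfrac12\dot{\partial}_j\rho$ into closed form — where the symmetric contractions and the summed‑versus‑free index conventions in $G^i=\beta y^i$ are the likeliest source of slips, rather than any genuine difficulty.
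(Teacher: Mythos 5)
Your proposal is correct, and for the main computation it takes a genuinely different route from the paper. The paper obtains $G^i$ via projective flatness and the projective factor $P=\tfrac{y^k\partial_kF}{2F}$ (your consistency check), and then computes $R^i_{\,\,j}$ by brute force from the coordinate formula $R^i_{\,\,j}=2\partial_jG^i-y^k\partial_kN^i_j+2G^kG^i_{jk}-N^i_kN^k_j$, evaluating $\partial_jG^i$, $N^i_j$, $G^i_{jk}$, $\partial_kN^i_j$, $N^i_kN^k_j$ and $G^kG^i_{jk}$ separately; constant curvature is then read off from the trace $R^i_{\,\,i}=(n-1)\rho$ with $\rho=F^2$. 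You instead exploit the isotropic decomposition of Lemma \ref{lemma1}(b) and prove the structural identity $d_J(S_0\beta)=2\,d_{h_0}\beta$ (valid precisely because $b_i$ is a gradient), which collapses $\alpha$ to $\beta d_J\beta-d_{h_0}\beta=\tfrac12 d_J\rho$. This buys two things: the component formula for $R^i_{\,\,j}$ drops out of $\Phi=\rho J-\tfrac12 d_J\rho\otimes\C$ with almost no algebra, and the flag-curvature claim is verified against the actual definition $\Phi=\kappa(F^2J-Fd_JF\otimes\C)$ rather than only through the Ricci trace, which is a cleaner and strictly stronger conclusion. One point worth making explicit: your factorization gives the semi-basic coefficient $\alpha_j$ with numerator $4h(x)c_{rj}y^r-(2c_{rs}x^ry^s+\langle c',y\rangle)(2c_{rj}x^r+c_j)$, whereas the printed proposition has $(2c_{rs}x^ry^s+4\langle c',y\rangle)$ in the second factor; your version is the one consistent with $i_S\alpha=\rho$ and with $\tfrac12\dot{\partial}_j\rho$ from \eqref{pa-dot-k-rho}, so the discrepancy is a typo in the statement rather than a gap in your argument.
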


\begin{proof}
 By making use of Theorem \ref{projective-flat}, the metric \eqref{general_F} is projectively flat and hence its geodesic spray is given by 
 $$G^i=P(x,y)y^i, \quad P(x,y)=\frac{y^k\partial_kF}{2F}.$$
 Now, using \eqref{y^k-pa-F} we get the required formula for the geodesic spray $G^i$.

The Jacobi endomorphism $R^i_{\,\, j}$ has the form
$$R^i_{\,\, j}=2\partial_jG^i-y^k\partial_kN^i_j+2G^kG^i_{jk}-N^i_kN^k_j$$
where $N^i_j=\dot{\partial}_jG^i$ and $G^i_{jk}=\dot{\partial}_kN^i_j$. By using the formula of $G^i$, we have the following
$$\partial_j G^i=-\frac{(2h(x))2c_{js}y^s-(2c_{rs}x^ry^s+\langle c',y\rangle)(2(2c_{rj}x^r+c_j))}{(2h(x))^2}y^i,$$
$$N^i_j=-\frac{(2c_{rj}x^r+c_j)y^i+(2c_{rs}x^ry^s+\langle c',y\rangle)\delta^i_j}{(2h(x))},$$
$$G^i_{jk}=-\frac{(2c_{rj}x^r+c_j)\delta^i_k+(2c_{rk}x^r+c_k)\delta^i_j}{(2h(x))},$$

$$\partial_kN^i_j=-\frac{(2h(x))(2c_{jk}y^i+2c_{ks}y^s\delta^i_j)-2(2c_{tk}x^t+c_k)((2c_{rj}x^r+c_j)y^i+(2c_{rs}x^ry^s+\langle c',y\rangle)\delta^i_j)}{(2h(x))^2},$$
$$N^i_kN^k_j=\frac{(2c_{rs}x^ry^s+\langle c',y\rangle)(3(2c_{rj}x^r+c_j)y^i+(2c_{rs}x^ry^s+\langle c',y\rangle)\delta^i_j)}{(2h(x))^2},$$
$$G^kG^i_{jk}=\frac{(2c_{rs}x^ry^s+\langle c',y\rangle)((2c_{rj}x^r+c_j)y^i+(2c_{rs}x^ry^s+\langle c',y\rangle)\delta^i_j)}{(2h(x))^2}.$$
By plugging the above quantities into the formula of $R^i_{\,\, j}$, the result follows. 

To show that $F$ has constant flag curvature, we compute the Ricci scalar $Ric$ as follows
$$R^i_{\,\,i}=(n-1)\left(\frac{4h(x)c_{rs}y^ry^s-(2c_{rs}x^ry^s+\langle c',y\rangle )^2} {(2h(x))^2}\right).$$
Hence, the  Ricci scalar is given by $\rho=F^2$ and so $F$  has  constant curvature  $1$.
\end{proof}

As a special case, we have the following
\begin{corollary}
Taking $c_{ij}=\mu\delta_{ij}$,  $c_i=0$, $c=1$, then we get
$$F_\mu=\sqrt{\mu\frac{(1+\mu|x|^2)|y|^2-\mu\langle x,y\rangle^2}{(1+\mu|x|^2)^2}}, \quad G^i=-\frac{\mu\langle x,y \rangle}{1+\mu|x|^2}y^i,$$
 and 
  $$R^i_{\,\, j}=\mu\left(\frac{(1+\mu|x|^2)|y|^2-\mu\langle x,y\rangle^2}{(1+\mu|x|^2)^2}\delta^i_j-\frac{(1+\mu|x|^2)\delta_{rj}y^r-\mu(\delta_{rs}x^ry^s)(\delta_{kj}x^k)} {(1+\mu|x|^2)^2}y^i\right).$$
$$R^i_{\,\, i}=(n-1)\mu\left(\frac{(1+\mu|x|^2)|y|^2-\mu\langle x,y\rangle^2}{(1+\mu|x|^2)^2}\right).$$
\end{corollary}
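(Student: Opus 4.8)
The plan is to treat this corollary as a direct specialization of the general formulas for $F$, $G^i$, and $R^i_{\,\,j}$ established in Theorem~\ref{projective-flat} and the preceding Proposition, obtained by substituting $c_{ij}=\mu\delta_{ij}$, $c_i=0$, $c=1$ and simplifying. First I would record the values taken by the basic scalar building blocks under this choice. Since $c=1$, $c'=0$, and $c_{ij}=\mu\delta_{ij}$, one has $h(x)=c_{ij}x^ix^j+\langle c',x\rangle+c=1+\mu|x|^2$, together with $c_{ij}y^iy^j=\mu|y|^2$, $c_{ij}x^iy^j=\mu\langle x,y\rangle$, and $\langle c',y\rangle=0$. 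I would also note the two vector-valued combinations $2c_{rj}x^r+c_j=2\mu x^j$ and $c_{rj}y^r=\mu\,\delta_{rj}y^r$, which enter the Jacobi endomorphism.

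Next I would substitute these into the general metric \eqref{general_F}. Every term carrying a factor $\langle c',y\rangle$ drops out, the numerator collapses to $4\mu[(1+\mu|x|^2)|y|^2-\mu\langle x,y\rangle^2]$ and the denominator to $4(1+\mu|x|^2)^2$; after the factor $4$ cancels this is precisely $F_\mu$. Feeding the same data into $G^i=-\frac{2c_{rs}x^ry^s+\langle c',y\rangle}{2h(x)}\,y^i$ from the preceding Proposition, the numerator reduces to $2\mu\langle x,y\rangle$ and the factor $2$ cancels, giving $G^i=-\frac{\mu\langle x,y\rangle}{1+\mu|x|^2}\,y^i$.

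The only step requiring a little care is the Jacobi endomorphism. I would substitute into
$$R^i_{\,\,j}=\frac{4h(x)c_{rs}y^ry^s-(2c_{rs}x^ry^s+\langle c',y\rangle)^2}{(2h(x))^2}\delta^i_j-\frac{4h(x)c_{rj}y^r-(2c_{rs}x^ry^s+4\langle c',y\rangle)(2c_{rj}x^r+c_j)}{(2h(x))^2}y^i,$$
again using that every $\langle c',y\rangle$ vanishes. The coefficient of $\delta^i_j$ becomes exactly $F_\mu^2=\mu\frac{(1+\mu|x|^2)|y|^2-\mu\langle x,y\rangle^2}{(1+\mu|x|^2)^2}$, while the coefficient of $y^i$, using $2c_{rj}x^r+c_j=2\mu x^j$ and $c_{rj}y^r=\mu\,\delta_{rj}y^r$, simplifies to $\mu\frac{(1+\mu|x|^2)\delta_{rj}y^r-\mu(\delta_{rs}x^ry^s)(\delta_{kj}x^k)}{(1+\mu|x|^2)^2}$ once one rewrites $\langle x,y\rangle=\delta_{rs}x^ry^s$ and $x^j=\delta_{kj}x^k$ to match the stated index form. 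This reproduces the displayed $R^i_{\,\,j}$.

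Finally I would compute the Ricci scalar by setting $j=i$ and summing. Here $\delta^i_i=n$, whereas in the second term $\delta_{ri}y^ry^i=|y|^2$ and $(\delta_{rs}x^ry^s)(\delta_{ki}x^k)y^i=\langle x,y\rangle^2$, so the bracketed numerator of the second term coincides with that of the first. The two contributions then combine to give $R^i_{\,\,i}=(n-1)\mu\frac{(1+\mu|x|^2)|y|^2-\mu\langle x,y\rangle^2}{(1+\mu|x|^2)^2}$, as claimed; as a consistency check this equals $(n-1)F_\mu^2$, recovering constant flag curvature $1$ in agreement with the general Proposition. The whole computation is routine substitution, and the only genuine (if minor) bookkeeping obstacle is matching the Kronecker-delta index structure in the $y^i$-coefficient of $R^i_{\,\,j}$ and verifying the trace cancellation that produces the factor $(n-1)$.
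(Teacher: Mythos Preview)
Your proposal is correct and follows exactly the approach implicit in the paper: the corollary is stated there without proof, as an immediate specialization of the general formulas \eqref{general_F} and those in the preceding Proposition, obtained by the substitution $c_{ij}=\mu\delta_{ij}$, $c_i=0$, $c=1$. Your explicit bookkeeping of the building blocks $h(x)$, $c_{ij}x^iy^j$, $c_{rj}y^r$, and $2c_{rj}x^r+c_j$ and the trace computation simply spell out what the paper leaves to the reader.
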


\begin{corollary}
   \label{c_ijnon0}
   Let $S=S_0-2\beta \C$ be a one form deformation of a flat spray $S_0$,
$$\beta=-\frac{2c_{ij}x^iy^j+\langle c',y\rangle} {2(c_{ij}x^ix^j+\langle c',x\rangle+c)}.$$
A necessary condition for $S$ to be metrizable  is ${c_{ij}}\neq0$. 
\end{corollary}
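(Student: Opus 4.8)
The plan is to argue by contraposition: I will show that the vanishing of all the $c_{ij}$ forces the regularity condition \textbf{(iii)} of Theorem \ref{MuBu} to fail, so that $S$ cannot be metrizable. Recall from the proof of the main metrizability theorem that condition \textbf{(iii)}, $\mathrm{rank}(dd_J\rho)=2n$, is equivalent to the non-degeneracy $\det(\rho_{ij})\neq 0$, where $\rho_{ij}:=\dot{\partial}_i\dot{\partial}_j\rho$ was there computed in closed form.

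First I would specialize that closed-form expression for $\rho_{ij}$ to the case $c_{ij}=0$. Every term of its numerator carrying a factor $c_{\bullet\bullet}$ then drops out, leaving only the last term, so that
$$\rho_{ij} = -\frac{c_ic_j}{(2h(x))^2}, \qquad h(x) = \langle c', x\rangle + c.$$
Thus, up to the nowhere-zero scalar $-1/(2h(x))^2$, the vertical Hessian $\rho_{ij}$ is exactly the outer product $c'\otimes c'$ of the constant covector $c'$ with itself.

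The decisive observation is that such a matrix has rank at most one. Hence for $n\ge 2$ its determinant vanishes identically, $\det(\rho_{ij})=0$, which violates the regularity condition \textbf{(iii)}; by the criterion of Theorem \ref{MuBu} the spray $S$ is therefore not metrizable, and consequently $c_{ij}\neq 0$ is necessary for metrizability. One may reach the same conclusion through the matrix $\partial_ib_j+b_ib_j$ of the Proposition: with $c_{ij}=0$ one has $\partial_ih=c_i$ and $b_j=-c_j/(2h(x))$, whence $\partial_ib_j+b_ib_j = \frac{3c_ic_j}{4(h(x))^2}$ is again rank one with vanishing determinant.

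The computation is entirely routine, so the main point is conceptual rather than technical: one has to recognize that switching off the quadratic part $c_{ij}$ of $h$ degenerates the vertical Hessian of the Ricci scalar into a rank-one form built from the single covector $c'$, which is precisely the degeneracy that \textbf{(iii)} rules out. The only case needing a separate word is $c'=0$, for then $\beta\equiv 0$, the spray $S=S_0$ has vanishing Ricci curvature and falls outside the hypotheses of Theorem \ref{MuBu}; in every other case $c'\neq 0$, the Hessian has rank exactly one, and the determinant argument applies verbatim.
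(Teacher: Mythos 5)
Your proof is correct and matches the paper's own (implicit) justification: the corollary is stated without proof, but the identical rank-one computation --- with $c_{ij}=0$ the relevant vertical Hessian degenerates to a multiple of $c_ic_j$, so the regularity/rank condition \textbf{(iii)} fails and metrizability is excluded by Theorem \ref{MuBu} --- appears essentially verbatim in the paper's subsequent proposition on $\text{rank}\ dd_J(\text{Tr}\,\Phi)$, where $\alpha_{ij}=\frac{c_ic_j}{4(\langle c',x\rangle+c)^2}$ is shown to have rank $1$. Your aside on the degenerate case $c'=0$ is a point the paper glosses over; note only that there $S=S_0$ is in fact (trivially) metrizable, so that case must be read as excluded by the standing non-vanishing Ricci curvature assumption rather than as merely "outside the hypotheses."
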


 It is known that, \cite{Bucataru3},  one of the conditions for   a spray $S$  with non-vanishing Ricci curvature to be  metrizable by a Finsler function of non-zero constant flag curvature is  $\text{rank}\ dd_J(\text{Tr} \ \Phi)=2n$.
As an application of the deformation of a flat spray by a  one form, we answer the following question: \\

\textit{ Does  any spray of non-vanishing Ricci curvature  satisfy the condition $\text{rank}\ dd_J(\text{Tr} \ \Phi)=2n$?}\\
 
The following proposition shows that, for a spray $S$,  if  $S$ has non vanishing Ricci curvature, then the  rank of the form $ dd_J(\text{Tr} \ \Phi)$ not necessarily maximal; that is, the condition $\text{rank}\ dd_J(\text{Tr} \ \Phi)=2n$ is sharp for the metrizability of $S$.

\begin{proposition} Let $\Phi$ the Jacobi endomorphism of  a spray $S$ , then we have:
\begin{description}
  \item[(a)] If $\text{rank}\ dd_J(\text{Tr} \ \Phi)=2n$, then $S$ has non-vanishing Ricci curvature.
  \item[(b)] If $S$ has non-vanishing Ricci curvature, then $\text{rank}\ dd_J(\text{Tr} \ \Phi)$ is not necessarily   maximal.
\end{description}
\end{proposition}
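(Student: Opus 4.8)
The plan is to separate the two assertions: part \textbf{(a)} is essentially a tautology, while part \textbf{(b)} requires producing one explicit counterexample, for which the one-form deformation apparatus already set up in this section is perfectly suited.

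For \textbf{(a)} I would argue by contraposition. Recall that the Ricci curvature equals $\mathrm{Tr}\,\Phi=(n-1)\rho$, so $S$ has vanishing Ricci curvature exactly when the function $\mathrm{Tr}\,\Phi$ vanishes identically on $\TM$. If $\mathrm{Tr}\,\Phi\equiv 0$, then trivially $d_J(\mathrm{Tr}\,\Phi)=0$ and hence $dd_J(\mathrm{Tr}\,\Phi)=0$, a two-form of rank $0$. Since $0\neq 2n$, the hypothesis $\mathrm{rank}\,dd_J(\mathrm{Tr}\,\Phi)=2n$ cannot hold when the Ricci curvature vanishes; equivalently, $\mathrm{rank}\,dd_J(\mathrm{Tr}\,\Phi)=2n$ forces the Ricci curvature to be non-zero. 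No computation is needed here.

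For \textbf{(b)} the goal is to exhibit a single spray with non-vanishing Ricci curvature for which $dd_J(\mathrm{Tr}\,\Phi)$ is degenerate. I would take the one-form deformation $S=S_0-2\beta\C$ of a flat spray by a \emph{constant} one-form $\beta=a_ky^k$, with $a=(a_1,\dots,a_n)\neq 0$ constant and $n\geq 2$. Since the coefficients $b_k=a_k$ are constant they are trivially gradient, so the formula for $dd_J(\mathrm{Tr}\,\Phi)$ derived earlier in this section applies verbatim. Here $S_0\beta=y^k\partial_k\beta=0$, so $\mathrm{Tr}\,\Phi=(n-1)(\beta^2-S_0\beta)=(n-1)(a_ky^k)^2$, which is not identically zero: the Ricci curvature of $S$ is non-vanishing.

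It then remains to see that the rank drops. Substituting $\partial_ib_j=0$ and $\partial_j\beta=0$ into that formula collapses it to
\[
dd_J(\mathrm{Tr}\,\Phi)=2(n-1)\,a_ia_j\,dx^i\wedge dy^j=2(n-1)\,(a_i\,dx^i)\wedge(a_j\,dy^j),
\]
a decomposable two-form, namely a single wedge of two linearly independent one-forms, whose rank is exactly $2$. For $n\geq 2$ we have $2<2n$, so the rank is not maximal, establishing \textbf{(b)}. The only genuine point to verify is this degeneracy, which amounts to observing that the obstruction matrix $\partial_ib_j+b_ib_j=a_ia_j$ is the rank-one outer product $a\otimes a$ and hence singular whenever $n\geq 2$. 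The main obstacle is therefore not any difficulty in the argument but the choice of example: a constant one-form is precisely what keeps $\mathrm{Tr}\,\Phi$ nonzero while forcing the matrix $a_ia_j$ to degenerate.
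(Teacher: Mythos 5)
Your proposal is correct and follows essentially the same strategy as the paper: part \textbf{(a)} is the same trivial contrapositive observation (the paper simply calls it ``obvious''), and part \textbf{(b)} is settled by exhibiting a one-form deformation of the flat spray for which the coefficient matrix $\partial_ib_j+b_ib_j$ is a rank-one outer product, so that $dd_J(\mathrm{Tr}\,\Phi)$ has rank $2<2n$. The only difference is the choice of counterexample: you take the constant one-form $\beta=a_ky^k$ (matrix $a_ia_j$), while the paper takes $b_i(x)=-c_i/\bigl(2(\langle c',x\rangle+c)\bigr)$, i.e.\ the $c_{ij}=0$ member of its family \eqref{b_i(x)}, with $\alpha_{ij}=c_ic_j/\bigl(4(\langle c',x\rangle+c)^2\bigr)$; both examples work for the same reason.
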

\begin{proof}The proof of \textbf{(a)} is obvious, so we prove \textbf{(b)} only. The proof of  \textbf{(b)} can be performed by providing an example in which $S$ has non-vanishing Ricci curvature and $ dd_J(\text{Tr} \ \Phi)$ has not maximal rank.
Let $$S=S_0-2\beta \C, \quad \beta=-\frac{\langle c',y\rangle} {2(\langle c',x\rangle+c)}, \quad b_i(x)=-\frac{c_i} {2(\langle c',x\rangle+c)},$$
where $c'=(c_1,c_2,...,c_n)$, $c_i$ and $c$ are arbitrary  constants.
Since $\text{Tr} \ \Phi=\text{Ric}=(n-1)(\beta^2-S_0\beta)$, Ric is the Ricci curvature. Then, we get
$$\text{Ric}=-(n-1)\frac{\langle c',y\rangle^2} {4(\langle c',x\rangle+c)^2}.$$
Since  $n\neq 1$, we have $\text{Ric}\neq 0$. Straightforward calculations lead to
$$dd_J(\text{Tr} \ \Phi)=2(n-1)(\alpha_{ij}dx^i\wedge dy^j+\beta_{ij}dx^i\wedge dx^j),$$
where $\alpha_{ij}=\frac{c_ic_j}{4(\langle c',x\rangle+c)^2}$. Then,
$ dd_J(\text{Tr} \ \Phi)$ has  maximal rank if $\det(\alpha_{ij})\neq 0$, but $\det(\alpha_{ij})= 0$ and moreover $\text{rank}(\alpha_{ij})=1$, then the result follows.
\end{proof}

\section{Affine transformations of Klein metric}

The  metric on $\mathbb{B}^n\subset\mathbb{R}^n$  given by

$$ F_{\mu}=\sqrt{\frac{(1+\mu|x|^2)|y|^2-\mu\langle x,y\rangle^2}{(1+\mu|x|^2)^2}},\quad P=\frac{-\mu\langle x,y \rangle}{1+\mu|x|^2}, \qquad y\in T_x\mathbb{B}^n\simeq \mathbb{R}^n $$
 is projectively flat Riemannian metrics of constant (flag) curvature $\mu$. It is known that  every locally projectively flat Riemannian metric is locally isometric to $F_{\mu}$ for some constant $\mu$.   Starting by the Klein metric, the affine transformation $x$ to $Ax+B$ and $y$ to $Ay$ where $A$ is an $n\times n$ invertible matrix and $B$ is an arbitrary $n\times1$ matrix, produces projectively flat Riemannian metrics.  
 
\medskip 
 
 In this section, taking $\mu=1$, we show that the family \eqref{general_F} is not isometric to the Klein metric via affine transformations.

\begin{theorem}
The family \eqref{general_F} is not, generally,  isometric to  the Klein metric $(\mu=1)$ via affine transformations. 
\end{theorem}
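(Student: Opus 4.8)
The plan is to reduce the affine-equivalence question to a single linear-algebra invariant of the quadratic polynomial $h$. First I would record that the family \eqref{general_F} depends on the constants only through $h(x)=c_{ij}x^ix^j+\langle c',x\rangle+c$. Completing the square in the numerator writes it as $4h\,c_{ij}y^iy^j-(2c_{ij}x^iy^j+\langle c',y\rangle)^2$, and since $2c_{ij}x^iy^j+\langle c',y\rangle=y^k\partial_k h$ while $2c_{ij}=\partial_i\partial_j h$, one gets the compact form
$$F^2=\frac{\tfrac12(\partial_i\partial_j h)\,y^iy^j}{h}-\frac{(y^k\partial_k h)^2}{4h^2}.$$
In particular $F$ is unchanged when $h$ is scaled by a nonzero constant, and the Klein metric $(\mu=1)$ is exactly the member $F_{h_0}$ with $h_0(x)=1+|x|^2$. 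I will write $F_h$ for the member of \eqref{general_F} determined by $h$.

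Next I would establish the two facts that drive the reduction. (i) \emph{Affine covariance:} for $\phi(x)=Ax+B$ one has $\phi^*F_h=F_{h\circ\phi}$, because $d(h\circ\phi)(y)=dh(Ay)$ and $\mathrm{Hess}(h\circ\phi)=A^{\top}\mathrm{Hess}(h)\,A$ feed directly into the displayed formula. (ii) \emph{Rigidity of $h$:} the projective factor computed earlier, $P=-\tfrac12\,y^k\partial_k\log h$, is an invariant of the spray, so its coefficients $b_k=-\tfrac12\,\partial_k\log h$ are determined by $F$; since $\partial_k\log h$ is then a fixed closed one-form, it fixes $h$ up to a nonzero multiplicative constant, whence $F_h=F_{h'}$ forces $h'=\lambda h$. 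Combining (i) and (ii), $F_h$ is isometric to the Klein metric through an affine map precisely when $h=\lambda\,(h_0\circ\phi)$ for some $\lambda\neq0$ and some invertible affine $\phi$.

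I would then translate this condition into signatures. Writing $\widetilde H=\begin{pmatrix}c_{ij}&\tfrac12 c_i\\ \tfrac12 c_j&c\end{pmatrix}$ for the symmetric $(n+1)\times(n+1)$ matrix of $h$ in homogeneous coordinates, the substitution $\hat x\mapsto\widetilde A\hat x$ with $\widetilde A=\begin{pmatrix}A&B\\0&1\end{pmatrix}$ gives $\widetilde H_{h\circ\phi}=\widetilde A^{\top}\widetilde H\,\widetilde A$, a congruence. Hence $h=\lambda(h_0\circ\phi)$ forces $\widetilde H=\lambda\,\widetilde A^{\top}\widetilde A$, which by Sylvester's law of inertia is definite; conversely, solving the block equations shows every definite $\widetilde H$ arises this way (with $\lambda=\det\widetilde H/\det(c_{ij})$). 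Thus the affine orbit of the Klein metric is exactly $\{F_h:\widetilde H\text{ is definite}\}$. Because the nondegeneracy of $F_h$ is governed by $\det\widetilde H\neq0$ — indeed the earlier computation yields $\det(\rho_{ij})=\det\widetilde H\,/\,h^{\,n+1}$ — I can finish by choosing admissible constants for which $\widetilde H$ is nondegenerate but indefinite; these form an open set of parameters, the corresponding $F_h$ are genuine (rank-$n$, possibly pseudo-Riemannian) members of \eqref{general_F}, and their signature cannot match the definite $\widetilde H_0=I_{n+1}$ of the Klein metric, so no affine isometry exists. This proves the statement, and explains the qualifier ``generally.''

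The main obstacle I anticipate is step (ii): without the rigidity of $h$ up to scaling, an affine isometry of the metrics would not force the relation $h=\lambda(h_0\circ\phi)$ between the polynomials, and the signature invariant would furnish only a sufficient, not a necessary, criterion for affine equivalence; it is the projective-factor identity $P=-\tfrac12 y^k\partial_k\log h$ that secures it. A secondary point requiring care is the explicit indefinite choice: one must keep $\det\widetilde H\neq0$ so that $F_h$ is a bona fide metric of the allowed rank-$n$ type, after which the mismatch of inertia with $\widetilde H_0$ rules out any affine isometry.
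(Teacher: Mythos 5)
Your proposal is correct, and it takes a genuinely different, more structural route than the paper. The paper argues by direct computation: it applies a general affine map to the Klein metric, matches the resulting coefficients against \eqref{general_F} to obtain the system $2c_{ij}=\sum_k a_{ki}a_{kj}$, $c_i=\sum_k a_{ki}b_k$, $2c=1+|b|^2$, and exhibits an explicit two-dimensional choice ($c_{11}=c_{22}=0$, $c_{12}=c_{21}=1$, $c_1=c_2=c=1$) for which this system is inconsistent, since the first equation forces a column of $A$ to vanish. You instead characterize the whole affine orbit of the Klein metric inside the family: writing $F=F_h$, proving the rigidity $F_h=F_{h'}\Rightarrow h'=\lambda h$ via the projective factor $P=-\tfrac12 y^k\partial_k\log h$, and reducing affine equivalence to congruence of the homogeneous matrix $\widetilde H$, so that by Sylvester's law the orbit is exactly $\{F_h:\widetilde H\ \text{definite}\}$. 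Your approach buys an if-and-only-if criterion (hence also says which members \emph{are} affinely Klein, partially addressing the question the paper poses at the end of its Section 4), and it closes a small gap in the paper's coefficient matching: since $F_h$ determines $h$ only up to a nonzero scalar, equality of metrics forces only proportionality of the constants, not equality; your rigidity lemma handles this, whereas the paper's particular example merely happens to survive the extra scalar. What the paper's proof buys is concreteness, namely an explicit non-Klein member written out in closed form. One caveat you share with the paper and correctly flag: the counterexamples have indefinite $\widetilde H$ (the paper's example has $(c_{ij})=\left(\begin{smallmatrix}0&1\\1&0\end{smallmatrix}\right)$), so they are pseudo-Riemannian members of \eqref{general_F} rather than positive-definite ones --- consistent with Beltrami's theorem --- which is precisely why the statement reads ``not, generally, isometric.''
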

\begin{proof}
Consider the affine  transformation  $\overline{x}=Ax+B$ and $\overline{y}=Ay$, where $A$ is an $n\times n$ invertible matrix and $B$ is an arbitrary $n\times1$ matrix,
$$A=\left(    \begin{array}{cccc}         
                 a_{11}   & \ldots & a_{1n}\\
                \vdots  & \ddots & \vdots\\
                a_{n1}         &\ldots & a_{nn}  \end{array}\right),\qquad 
               B=\left( \begin{array}{cccc} 
               b_1\\
               \vdots\\
                b_n\end{array}\right).$$
  Then we have
  $$|\overline{x}|^2=(\sum_{k=1}^na_{ki}a_{kj})x^ix^j+2(\sum_{k=1}^na_{ki}b_{k})x^i+|b|^2,$$
  $$|\overline{y}|^2=(\sum_{k=1}^na_{ki}a_{kj})y^iy^j,$$
  $$\langle \overline{x},\overline{y}\rangle=(\sum_{k=1}^na_{ki}a_{kj})x^iy^j+(\sum_{k=1}^na_{ki}b_{k})x^i,$$              
 where $|b|^2=b_1^2+b_2^2+...+b_n^2$. Therefore, the Klein metric transforms to 
\begin{equation}
\label{F_Klein_trans}
F=\sqrt{\frac{H(x)(\sum_{k=1}^na_{ki}a_{kj})y^iy^j-((\sum_{k=1}^na_{ki}a_{kj})x^iy^j)^2-2\langle B',y\rangle (\sum_{k=1}^na_{ki}a_{kj})x^iy^j-\langle B',y\rangle^2} {(H(x))^2}},
\end{equation}
where $H(x):= 1+((\sum_{k=1}^na_{ki}a_{kj})x^ix^j+2(\sum_{k=1}^na_{ki}b_{k})x^i+|b|^2)$,  \,\,$B'=(B_1,...,B_n)$,  $B_i=\sum_{k=1}^na_{ki}b_{k}$.
Now, comparing the equations \eqref{general_F} and \eqref{F_Klein_trans}, we get
 $$
  2c_{ij} =\sum_{k=1}^na_{ki}a_{kj}, \qquad
   c_{i} =\sum_{k=1}^na_{ki}b_{k}, \qquad
  2c = 1+|b|^2.
$$
Thus we get, formally, the class \eqref{general_F} provided that the above system is consistent. But, generally, the above system is inconsistent. So once you have the transformation then you  get the $c's$, but if you have the $c's$ then the transformation not necessarily  exist. 

For example, 
let $$A=\left(
      \begin{array}{cc}
        a_{11} & a_{12} \\
        a_{21} & a_{22} \\
      \end{array}
    \right),\qquad B=\left(
                      \begin{array}{c}
                        b_1 \\
                        b_2 \\
                      \end{array}
                    \right).
    $$
   Then the constants $c's$ are given by
\begin{align*}
  c_{11} &= \frac{1}{2}( a_{11}^2+a_{21}^2 )\\
  c_{22} &= \frac{1}{2}(  a_{12}^2+a_{22}^2 )\\
  c_{12} &=c_{21}=  a_{11}a_{12}+a_{21}a_{22} \\
  c_1 &= a_{11}b_1+a_{21}b_2 \\
  c_2 &=a_{12} b_1+a_{22}b_2 \\
  c &= \frac{1}{2}(1+(b_1^2+b_2^2)).
\end{align*}
Now, take $c_{11}=c_{22}=0, c_{12}=c_{21}=1, c_1=1,c_2=1,c=1$, we get  a projectively flat metric and at the same time by substitution in the above system one obtains inconsistent system. For this choice of the $c's$, we have
$$F=\sqrt{\frac{(8x_1x_2y_1y_2-4x_1^2y_2^2-4x_2^2y_1^2+4x_1y_1y_2-4x_1y_2^2-4x_2y_1^2+4x_2y_1y_2-y_1^2+6y_1y_2-y_2^2)}{4(2x_1x_2+x_1+x_2+1)^2}},$$
and the projective factor is given by
$$\beta=-\frac{1}{2}\frac{2x_1y_2+2x_2y_1+y_1+y_2}{2x_1x_2+x_1+x_2+1}.$$
So one can say that the affine transformation of Klein metric is contained in \eqref{general_F} but not any metric in \eqref{general_F} can be obtained by an affine transformation.
\end{proof}

Now, the question is \\

\textit{What is the isometry  (transformation) between the klein metric and  the family \eqref{general_F}?
}
 \section{Finsler solutions for Hilbert fourth problem and examples}

Since the deformation spray $S$ of a flat spray $S_0$ is always isotropic and in the case which the curvature of $S$ is non zero, then the metric freedom \cite{Mu-Elgendi} of $S$ is unique up to some constants. Therefore, in our case the deformation of a flat spray  by the specific  one form $\beta=b_i(x)y^i$ where $b_i(x)$ given by \eqref{b_i(x)} is metrizable by unique Riemannain metric given in \eqref{general_F}. However, in this section, we introduce  some new projectively flat Finsler metrics and hence new Finsler solutions for Hilbert's fourth problem. Although,  Lots of new projectively flat Finsler metrics can be constructed, we will mention only two examples. 

\medskip

For simplicity we consider  the following  special case.
 \begin{corollary}
 Putting $c_{ij}=\lambda\delta_{ij}$,    we have
 \begin{equation}
 \label{F_proj_flat}
F=\sqrt{\frac{4\lambda(\lambda|x|^2+\langle c',x \rangle+c)|y|^2-4\lambda^2\langle x,y\rangle^2-4\lambda\langle c',y \rangle\langle x,y \rangle -\langle c',y \rangle^2}{4(\lambda|x|^2+\langle c',x \rangle+c)^2}},
\end{equation}
is a family of projectively flat metrics with the projective factor
$$\beta=-\frac{2\lambda\langle x,y \rangle+\langle c',y \rangle}{2(\lambda|x|^2+\langle c',x \rangle+c)}.$$
\end{corollary}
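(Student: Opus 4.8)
The plan is to derive this Corollary as a direct specialization of Theorem~\ref{projective-flat} together with the projective-factor formula $P(x,y)=-\frac{2c_{ij}x^iy^j+\langle c',y\rangle}{2h(x)}$. First I would set $c_{ij}=\lambda\delta_{ij}$ and record the resulting reductions of the symmetric-tensor contractions that appear throughout \eqref{general_F}: namely $c_{ij}x^ix^j=\lambda|x|^2$, $c_{ij}y^iy^j=\lambda|y|^2$ and $c_{ij}x^iy^j=\lambda\langle x,y\rangle$, so that $h(x)=\lambda|x|^2+\langle c',x\rangle+c$. By Corollary~\ref{c_ijnon0} metrizability forces $c_{ij}\neq0$, so one keeps $\lambda\neq0$.

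Substituting these reductions into the numerator $4h(x)c_{ij}y^iy^j-4(c_{ij}x^iy^j)^2-4\langle c',y\rangle c_{ij}x^iy^j-\langle c',y\rangle^2$ and the denominator $(2h(x))^2$ of \eqref{general_F} gives exactly the expression \eqref{F_proj_flat}; the same substitution in the projective-factor formula yields the stated $\beta$. This is a pure verification with no genuine obstacle, since the contractions collapse termwise.

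The projective flatness is then inherited rather than reproved: Theorem~\ref{projective-flat} shows that every member of the family \eqref{general_F} satisfies Hamel's criterion $y^j\dot{\partial}_k\partial_jF=\partial_kF$, and \eqref{F_proj_flat} is precisely the member obtained for $c_{ij}=\lambda\delta_{ij}$. Hence the only point requiring care is the standing nondegeneracy hypothesis: one must confirm that for suitable $\lambda,c',c$ the metric tensor (equivalently $\rho_{ij}=\dot{\partial}_i\dot{\partial}_j\rho$) is nonsingular, so that \eqref{F_proj_flat} is a genuine Riemannian metric and not merely a formal square root. I would check this by inserting $c_{ij}=\lambda\delta_{ij}$ into the formula for $\rho_{ij}$ established in the proof of the metrizability theorem and verifying that $\det(\rho_{ij})\neq0$ on an open set of parameters; this is the mild ``appropriate constants'' caveat already carried over from Theorem~\ref{projective-flat}.
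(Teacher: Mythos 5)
Your proposal is correct and follows the same route as the paper, which states this corollary without proof precisely because it is the direct specialization $c_{ij}=\lambda\delta_{ij}$ of the family \eqref{general_F} and its projective factor from Theorem \ref{projective-flat}; your termwise reductions of the contractions and the inherited Hamel criterion are exactly what is intended. Your added remarks on $\lambda\neq 0$ and on checking $\det(\rho_{ij})\neq 0$ for suitable parameters are sensible refinements of the paper's ``appropriate constants'' caveat, but they introduce nothing beyond the paper's implicit argument.
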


By making use of the above corollary, since $\beta$ is closed one form on $M$ and $F$ is projectively flat Riemannian metric, then we have the following  example of projectively flat Finsler metric.

\begin{example}
The family of metrics
 $$\overline{F}=\frac{\sqrt{4\lambda (\lambda|x|^2+\langle c',x \rangle+c)|y|^2-4\lambda^2\langle x,y\rangle^2-4\lambda\langle c',y \rangle\langle x,y \rangle -\langle c',y \rangle^2}+ (2\lambda\langle x,y \rangle+\langle c',y \rangle )}{2(\lambda|x|^2+\langle c',x \rangle+c)}$$
 is new family of projectively flat Finsler metrics. Where
$$\overline{G}^i=P(x,y)y^i, \quad P(x,y)=-\frac{2\lambda\langle x,y \rangle+\langle c',y \rangle}{2(\lambda|x|^2+\langle c',x \rangle+c)}+\left(F-\frac{\lambda|y|^2}{2F(\lambda|x|^2+\langle c',x \rangle+c)}\right).$$
Consequently, we have new Finsler solutions for Hilbert's fourth problem.
\end{example}

By the help of \cite{Shen-book} (Example 8.2.2, Page 156), we have another Finsler solution for   Hilbert's fourth problem as follows.

\begin{example}
The metric
\begin{equation*}
\label{F_Funk}
\begin{split}
&\Theta(x,y)=\frac{\langle c',y\rangle\langle c',x\rangle-4\lambda \langle x,y\rangle} {4c\lambda |x|^2-c^2}\\
&+\frac{\sqrt{16 \lambda^2c^2(\langle x,y\rangle^2-|x|^2|y|^2)+\langle c',y\rangle^2(\langle c',x\rangle^2+4\lambda |x|^2-c^2)-8\lambda c\langle c',y\rangle\langle c',x\rangle\langle x,y\rangle+4\lambda c^3|y|^2}}{4c\lambda |x|^2-c^2}æ
\end{split}
\end{equation*}

is Funk metric  and, moreover, it is projectively flat with the projective factor
$P=\frac{\Theta(x,y)}{2}$. Thus $\Theta(x,y)$ is projectively flat with constant flag curvature
$-\frac{1}{4}$.
\end{example}

\begin{proof}
Using  \eqref{F_proj_flat}, we have 
$$F(0,y)=\phi(y)=\sqrt{\frac{4\lambda c |y|^2-\langle c',y\rangle^2}{4c^2}}.$$
Define 
$$\Theta(x,y)=\phi(y+\Theta(x,y)x)=\sqrt{\frac{4\lambda c |y+\Theta(x,y)x|^2-\langle c',y+\Theta(x,y)x\rangle^2}{4c^2}}.$$
Squaring both sides of the above  equation and solving it for $\Theta$, we get the required formula.
Since  $\phi(y)$ is a Minkowski norm, then $\Theta(x,y)$ is Funk metric and it is projectively flat metric with the projective factor $P=\frac{\Theta(x,y)}{2}$.
\end{proof}



The following example shows a  one form deformation of a  non flat spray which is not metrizable.

\begin{example}[] \ %
  \\ Let $M= \{(x^1,x^2)\in\mathbb{R}^2:\, x^2> 2\}$ and $S_0$ be a spray
  given by the coefficients
  \begin{displaymath}
    G^1_0:=   \frac{(y^1)^2}{2x^2}, \qquad
    G^2_0:=0,
  \end{displaymath}
  and take $\beta=y^1+y^2$.
  Now consider the deformation $S=S_0-2\beta C$. The new coefficients are given by
   \begin{displaymath}
    G^1:=   \frac{(y^1)^2}{2x^2}+y^1(y^1+y^2), \qquad
    G^2:=y^2(y^1+y^2),
  \end{displaymath}
  The spray $S$ is isotropic and the coefficients of the nonlinear connection
  are given by
  \begin{displaymath}
    N_1^1=\frac{y^1}{x^2}+2y^1+y^2,\quad
    N_1^2=y^1,\quad
    N_2^1=y^2, \quad
    N_2^2=y^1+2y^2.
  \end{displaymath}
  The horizontal basis is $\{h_1,h_2\}$ where
  \begin{alignat*}{1}
    h_1&=\frac{\partial}{\partial x^1}-\left(\frac{y^1}{x^2}+2y^1+y^2\right)\frac{\partial}{\partial
      y^1}-y^2\frac{\partial}{\partial
      y^2},
    \\
    h_2&=\frac{\partial}{\partial x^2}-y^1\frac{\partial}{\partial
      y^1}-(y^1+2y^2)\frac{\partial}{\partial
      y^2}.
  \end{alignat*}
  We have  
   \begin{alignat*}{1}
    v_{1}:=[[h_1,h_2],h_1]&=-\left(\frac{((x^2)^2y^1+(x^2)^2y2+x^2y^1-y^2x^2+y^1}{(x^2)^2}\right)\frac{\partial}{\partial y^1}\\
   & +\left(\frac{(x^2)^2y^1(x^2)^2y^2+2x^2y^1+x^2y^2+y^1}{(x^2)^2}\right)\frac{\partial}{\partial
        y^2}
    \\
    v_{2}:=\big[[h_1,h_2],h_2\big]& =-\left(\frac{(x^2)^3y^1+(x^2)^3y^2+2y^1}{(x^2)^3}
   \right)\frac{\partial}{\partial y^1}\\
 &  +\left(\frac{(x^2)^2y^1+(x^2)^2y^2-x^2y^1+2y^1}{(x^2)^2}\right)\frac{\partial}{\partial
        y^2}.
  \end{alignat*}

  Being $v_1$ and $v_2$ linearly independent we have
  $\mathcal{H} =Span\{h_1, h_2, v_1, v_2\}= T\TM$, where $\mathcal{H}$ is the holonomy distribution generated by  the horizontal vectors and their successive Lie brackets. Consequently, the Liouville  vector field $\C \in \mathcal{H}$ hence the spray is not metrizable.

\end{example}
The following example introduces  a one form deformation of a flat spray which is not   metrizable.
\begin{example}[] \ %
  \\ Let $M= \{(x^1,x^2)\in\mathbb{R}^2:\, x^2>0\}$ and $S_0$ be a flat  spray.
 So  the coefficients are given by  
  \begin{displaymath}
    G_0^1= 
    G_0^2=0,
  \end{displaymath}
  and take $\beta=y^1+y^2$.
  Now consider the deformation $S=S_0-2\beta \C$. The new coefficients are given by
   \begin{displaymath}
    G^1:=   y^1(y^1+y^2), \qquad
    G^2:=y^2(y^1+y^2),
  \end{displaymath}
  The spray $S$ is isotropic and the coefficients of the nonlinear connection
  are given by
  \begin{displaymath}
    N_1^1=2y^1+y^2,\quad
    N_1^2=y^1,\quad
    N_2^1=y^2, \quad
    N_2^2=y^1+2y^2.
  \end{displaymath}
  The horizontal basis is $\{h_1,h_2\}$ where
  \begin{alignat*}{1}
    h_1&=\frac{\partial}{\partial x^1}-\left(2y^1+y^2\right)\frac{\partial}{\partial
      y^1}-y^2\frac{\partial}{\partial
      y^2},
    \\
    h_2&=\frac{\partial}{\partial x^2}-y^1\frac{\partial}{\partial
      y^1}-(y^1+2y^2)\frac{\partial}{\partial
      y^2}.
  \end{alignat*}
  We have
  \begin{alignat*}{1}
    v_{1}:=[h_1,h_2]&=-\left(y^1+y^2\right)\frac{\partial}{\partial y^1}+\left(y^1+y^2\right)\frac{\partial}{\partial
        y^2}.
  \end{alignat*}
  The successive Lie brackets of $h_1$ and $h_2$ produce no more linearly independent vectors and hence the holonomy distribution   $\mathcal{H} =Span\{h_1, h_2, v_1\}$. The metric freedom \cite{Mu-Elgendi} of $S$ is unique.  Now we can check if we have  regular energy function metricizes $S$ or not.
 
 The spray ${S}$ is Finsler metrizable  if there exists a function  ${E}$ satisfying
the following system of partial differential equations
$$\mathcal{L}_C{E}=2{E}, \hspace{1cm } d_{{h}}{E}=0,$$
which can be written in the form
$$y_1\dot{\partial}_1{E}+y_2\dot{\partial}_2 {E}-2{E}=0,$$
$$  \frac{\partial E}{\partial x^1}-\left(2y^1+y^2\right)\frac{\partial E}{\partial
      y^1}-y^2\frac{\partial E}{\partial
      y^2}=0,$$
$$\frac{\partial E}{\partial x^2}-y^1\frac{\partial E}{\partial
      y^1}-(y^1+2y^2)\frac{\partial E}{\partial
      y^2} =0,$$
 $$-\left(y^1+y^2\right)\frac{\partial E}{\partial y^1}+\left(y^1+y^2\right)\frac{\partial E}{\partial
        y^2}=0.$$
 The above system has the solution
 $$E=C_1e^{4(x^1+x^2)}(y^1+y^2)^2.$$
 The matrix $(g_{ij})$ associated with $E$ is singular and hence the spray is not metrizable. 
 Here in this example $\beta$ is closed but  $det(\partial_ib_j+b_ib_j)=0$.
\end{example}

\section*{Acknowledgement} The author would like to express his deep  gratitude to Professor   Zolt\'{a}n Muzsnay (University of Debrecen) for his valuable discussions and  suggestions.

\providecommand{\bysame}{\leavevmode\hbox
to3em{\hrulefill}\thinspace}
\providecommand{\MR}{\relax\ifhmode\unskip\space\fi MR }
\providecommand{\MRhref}[2]{%
  \href{http://www.ams.org/mathscinet-getitem?mr=#1}{#2}
} \providecommand{\href}[2]{#2}


\begin{thebibliography}{10}

\bibitem{sprays}
W. Ambrose, R. S. Palais and I. M. Singer,  \emph{Sprays}, Acad. Brasil. Ciencas,  \textbf{32},  (1960) 163--178.

\bibitem{Bao-Robles}
D. Bao and C. Robles, \emph{Ricci and flag curvatures in Finsler geometry}, in A sampler of Riemann - Finsler geometry, edited by D. Bao et al., Math. Sci. Res. Inst. Publ. 50, Cambridge Univ. Press, Cambridge, 2004, 197--259.

\bibitem{Berwald}
L. Berwald, \emph{$\ddot{U}$berdien-dimensionalen Geometrien konstanter Kr$\ddot{u}$mmung, in denen die Geraden die K$\ddot{u}$rzesten sind}, Math. Z., \textbf{30} (1929), 449--469.



\bibitem{Bucataru1}
I. Bucataru and Z. Muzsnay, \emph{Projective and Finsler metrizability: parameterization-rigidity of the geodesics}, Int. J. Math. \textbf{23}, 9 (2012).


\bibitem{Bucataru2}
I. Bucataru and Z. Muzsnay, \emph{Projective metrizability and formal integrability}, Symmetry, Integrability and Geometry: Methods and Applications, 7(2011).

\bibitem{Bucataru3}
I. Bucataru and Z. Muzsnay, \emph{Sprays metrizable by Finsler functions of constant flag curvature}, Diff. Geom. Appl., \textbf{31}, 3 (2013),  405-415.


\bibitem{Bucataru4}
I. Bucataru and Z. Muzsnay, \emph{Metrizable isotropic second-order differential equations and Hilbert's fourth problem},  J. Aust. Math. Soc., \textbf{97} (2014), 27--47.


\bibitem{Shen-book}
S. S. Chern and Z. Shen, \emph{Riemann-Finsler Geometry}, World Scientific Publishers 2004.


\bibitem{crampin}
M. Crampin, \emph{On the inverse problem for sprays}, Publ. Math. Debrecen, \textbf{70} (2007), 319--335.


\bibitem{Mu-Elgendi}
S. G. Elgendi and Z. Muzsnay, \emph{Freedom of h (2)-variationality and metrizability of sprays}, Diff. Geom.  Appl., \textbf{54} A (2017), 194--207.



\bibitem{Hamel}
G. Hamel, \emph{$\ddot{U}$ber die Geometrien in denen die Geraden die K$\ddot{u}$rzesten sind},
Math. Ann.,  \textbf{57}(1903), 231--264.




\bibitem{Krupka}
D. Krupka and A. E. Sattarov, \emph{The inverse problem of the calculus of variations for Finsler
structures}, Math. Slovaca, \textbf{35} (1985), 217--222.





\bibitem{Krupkova} O. Krupkov\'a, \emph{Variational metric structures}
  Publ.~Math.~Debrecen \textbf{62} (2003), no.~3-4, 461--495.

\bibitem{Li} B. Li, \emph{On the classification of projectively flat Finsler
    metrics with constant flag curvature}, Adv.  Math., \textbf{257} (2014),
  266--284.



\bibitem{shen-book1}
Z. Shen, \emph{ Differential geometry of spray and Finsler spaces}, Springer, 2001.




\bibitem{Szilasi}
J. Szilasi and S. Vattamany, \emph{ On the Finsler-metrizabilities of spray manifolds}, Period. Math.
Hungar.,  \textbf{44} (2002), 81--100.




























\end{thebibliography}
\end{document}